\documentclass[12pt,A4paper]{amsart}
\usepackage{amsmath,amsfonts,amsthm,amssymb,color,hyperref}

\usepackage[T1]{fontenc}

\usepackage{graphicx}

\usepackage{subfigure}
\usepackage[foot]{amsaddr}

\makeatletter
     \def\section{\@startsection{section}{1}%
     \z@{.7\linespacing\@plus\linespacing}{.5\linespacing}%
     {\bfseries
     \centering
     }}
     \def\@secnumfont{\bfseries}
     \makeatother

\usepackage{graphicx}


\newcommand{\R}{\mathbb R}

\newcommand{\E}{\mathbb E}

\newcommand{\1}{{\bf 1}}




\setlength{\textheight}{19.5 cm}
\setlength{\textwidth}{14 cm}
\newtheorem{theorem}{Theorem}[section]
\newtheorem{assumption}{Assumption}[section]
\newtheorem{lemma}[theorem]{Lemma}
\newtheorem{proposition}[theorem]{Proposition}
\newtheorem{corollary}[theorem]{Corollary}
\theoremstyle{definition}
\newtheorem{definition}[theorem]{Definition}
\theoremstyle{remark}
\newtheorem{remark}{Remark}
\numberwithin{equation}{section}
\setcounter{page}{1}

\begin{document}
\title[Oscillating Processes]{Oscillating Gaussian Processes}
\author[P.Ilmonen]{Pauliina Ilmonen$^1$}
 \address{1: Aalto University School of Science, Department of Mathematics and Systems Analysis, Finland. }
  \email{pauliina.ilmonen@aalto.fi}
\author[S.Torres]{Soledad Torres$^2$}
\address{2: Facultad de Ingenier\'ia, CIMFAV  Universidad de Valpara\'iso, Casilla 123-V, 4059 Valparaiso, Chile. }
 \email{soledad.torres@uv.cl}
\author[L.Viitasaari]{Lauri Viitasaari$^3$}
\address{3: Aalto University School of Business, Department of Information and Service Management, Finland (\textbf{Corresponding author})}
\email{lauri.viitasaari@iki.fi}

\begin{abstract}
In this article we introduce and study oscillating Gaussian processes defined by $X_t = \alpha_+ Y_t \1_{Y_t >0} + \alpha_- Y_t\1_{Y_t<0}$, where $\alpha_+,\alpha_->0$ are free parameters and $Y$ is either stationary or self-similar Gaussian process. We study the basic properties of $X$ and we consider estimation of the model parameters. In particular, we show that the moment estimators converge in $L^p$ and are, when suitably normalised, asymptotically normal.
\end{abstract}

\maketitle

\medskip\noindent
{\bf Mathematics Subject Classifications (2010)}: 60G15 (primary), 60F05, 60F25, 62F10, 62F12

\medskip\noindent
{\bf Keywords:} Gaussian processes, oscillating processes, stationarity, self-similarity, parameter estimation, central limit theorem

\allowdisplaybreaks

\section{Introduction}
During the past two decades interest in the study of the existence and uniqueness of stochastic differential equations driven by a fractional Brownian motion has been very intense and there have been many advances in their theory and applications.  In particular, strong solutions of  the following stochastic differential equation (SDE in short)
\begin{equation}\label{sde}
X_t = X_0 + \int_0^t b(s,X_s) ds + \int_0^t ¡\sigma(s,X_s) dB^H_s,
\end{equation}
under usual conditions on the coefficients,  such as Lipschitz and linear growth,  were developed by Nualart and R$\check{a}\text{\c{s}}$canu \cite{NualartRascanu2002}, and have been considered by many authors, see  \cite{Mishura} and the references therein.

Nevertheless, the case of SDE with  discontinuous coefficients has been less explored.  Most of the cases of stochastic differential equations driven by a fractional Brownian motion and with discontinuous coefficients which have been studied are those corresponding to discontinuous drift coefficient (for  $H>1/2$).  Regarding that, in  \cite{MN}, the authors studied a drift that is H\"older continuous except on a finite numbers of points. Another class of discontinuity in SDE driven by a fractional Brownian motion is related to adding a Poisson process to the equation. In \cite{BM}, extending  the results given in \cite{MN}, the authors proved the existence of the strong solution of this kind of SDE driven by a fractional Brownian motion and a Poisson point process. To the best of our knowledge, in the fractional Brownian motion framework, there is only a preliminary  work that studies  equations with discontinuous diffusion coefficient, written by Garz\'on et al.  \cite{GLT}. There the authors proved the existence and uniqueness of solutions to the SDE driven by the fractional Brownian motion $B^H$ with $H>\frac12$ given by
\begin{equation}\label{DISC}
X_t = X_0 + \int_0^t \sigma (X_s) d B^H_s \quad , \quad t \geq 0,
\end{equation}
where the function $\sigma$ is given by
\begin{equation}\label{sigma1}
\sigma (x) = \frac{1}{\alpha} \1_{x \geq 0} + \frac{1}{1-\alpha} \1_{x < 0}, \ \alpha \in \left(0,\frac12\right).
\end{equation}
The authors showed that the explicit solution to  the equation (\ref{DISC}) is
\begin{equation}\label{SDISC}
X_t =  \alpha B_t^H  \1_{B^H_t > 0} + (1- \alpha ) B^H_t\1_{B^H_t < 0} ,  \quad  \quad t \geq 0.
\end{equation}
It is straightforward to see that the explicit existence and uniqueness of solution to equation (\ref{DISC})  holds also if $\alpha$ and $1-\alpha$ are replaced with $\alpha_+$ and $\alpha_-$ satisfying $ 0 < \alpha_- < \alpha_+$ (or $0 < \alpha_+ < \alpha_-$, respectively). 



One of the reasons why SDEs with discontinuous diffusion coefficient are interesting is their relation to the Skew Brownian motion. In the Brownian motion framework, the Skew Brownian motion appeared  as a natural generalization of the Brownian motion. The Skew Brownian motion  is a process that behaves like a Brownian motion except that the sign of each excursion is chosen using an independent Bernoulli random variable with the parameter $\alpha \in (0 , 1)$.  For $\alpha = 1/2,$ the process corresponds to a Brownian motion. This process is a Markov process and a semi-martingale. Moreover, it is a strong solution to certain SDE with local time (see \cite{Lejay} for a survey). Let
\begin{equation}\label{SB}
X_t = x + B_t + (2\alpha - 1) L^0_t(X),
\end{equation}
where $L_t^0(X)$ is the symmetric local time of $X$ at $0$. In the case of the Brownian motion, it follows  from the  It\^o-Tanaka formula that the equations (\ref{SB}) and (\ref{DISC}) with $\sigma (x) = \frac{1}{\alpha} \1_{\{x \geq 0\}} + \frac{1}{1-\alpha} \1_{ \{x < 0\}}$ are equivalent. For a comprehensive survey on Skew Brownian motion, see the work by Lejay A. in \cite{Lejay}. 

In the case of the fractional Brownian motion, the Tanaka type formulas are more complicated and no relations between the two types of equations are known to exist. The motivation for the authors in \cite{GLT} to study equation \eqref{DISC} stemmed from this fact.

To the best of our knowledge, \cite{LP} is the only study that considers the inference of parameters related to SDE with a discontinuous diffusion process. The study considers the case of a discontinuous
diffusion coefficient that can only attain two different values. More precisely, the authors of \cite{LP} studied the so-called \emph{oscillating Brownian motion} that is a solution to the SDE
\begin{equation}\label{le-pi}
X_t = x + \int_0^t \sigma(X_s) dW_s,
\end{equation}
where $W$ is a standard Brownian motion and $\sigma(x) = \alpha_+\1_{x \ge 0}  + \alpha_- \1_{x < 0}, \quad x \in \mathbb{R}$. 
The authors proposed two natural consistent estimators, which are variants of the integrated volatility
estimator. Moreover, the stable convergence towards certain Gaussian mixture of the renormalised estimators was proven. The estimators are given by 
\begin{eqnarray}\label{LP}
\hat{\alpha}_+ = \sqrt{\frac{\sum_{k=1}^n  \left(X_k - X_{k-1} \right)^2    }{\sum_{k=1}^{n} \1_{X_k \ge0}    }}, \quad 
\hat{\alpha}_- = \sqrt{\frac{\sum_{k=1}^n  \left(X_k - X_{k-1} \right)^2  }{\sum_{k=1}^{n} \1_{X_k \le0}    }}.
\end{eqnarray}
Note that when the paths are
strictly positive or strictly negative, only one of the estimators can be computed.


Motivated by Equation (\ref{SDISC}), we define the Oscillating Gaussian process by 
\begin{equation}
\label{OfBm-b}
X_t = \alpha_+ Y_t\1_{Y_t > 0} + \alpha_- Y_t\1_{Y_t < 0}, t \in T,
\end{equation}
where  $\alpha_+$ and $\alpha_-$ are both strictly positive (or negative, respectively) constants. In addition to the above mentioned links to SDEs and skew Brownian motion, we note that \eqref{OfBm-b} could be applied in various other modelling scenarios as well, making oscillating Gaussian process an interesting object of study. For example, \eqref{OfBm-b} can be viewed as a model for different situations where the variance changes by regions. One of the main interests in this paper is in the estimation of the model parameters $\alpha_+$ and $\alpha_-$. In order to be able to compute estimators for both parameters in all possible cases, we define estimators based on moments and study their asymptotic properties. Moreover, we show that our moment based approach can be applied under a large class of driving Gaussian processes $Y$ in \eqref{OfBm-b}.
 
The rest of the paper is organised as follows. In Section \ref{sec:oscillating}, we introduce the oscillating Gaussian processes and study their basic properties such as moments, covariance structures, and continuity properties. Section \ref{sec:calibration} is devoted to model calibration. We begin by showing that the moment estimators are consistent and satisfy central limit theorems under suitable assumptions on the driving Gaussian process. On top of that, we also consider corresponding estimators based on discrete observations. In Subsection \ref{subsec:ss-oscillating}, we briefly discuss how Lamperti transform can be used to study oscillating Gaussian processes driven by self-similar Gaussian noise, and as a particular example, we apply the method to the case of the bifractional Brownian motion. 
We end the paper with a short summary and a discussion about future prospects.



\section{Oscillating Gaussian processes}
\label{sec:oscillating}
Throughout this section we consider Gaussian oscillating processes $X=(X_t)_{t\geq 0}$ defined by
\begin{equation}
\label{OfBm-general}
X_t = \alpha_+ Y_t\1_{Y_t > 0} + \alpha_- Y_t\1_{Y_t < 0},
\end{equation}
where $Y = (Y_t)_{t\geq 0}$ is a stationary Gaussian process and the $\alpha_+$ and $\alpha_-$ are positive parameters such that $\alpha_+\neq \alpha_-$. Note that the $\alpha_+$ and $\alpha_-$ describe the magnitude of variations of $X$ on different regions. Our goal is to estimate the unknown parameters $\alpha_+$ and $\alpha_-$. In order to do this, we assume that $\E (Y_t^2)=1$. Note that the general case $\E (Y_t^2) = \sigma^2$ can be written as
$$
X_t = \alpha_+ \sigma \tilde{Y}_t\1_{\tilde{Y}_t > 0} + \alpha_-\sigma \tilde{Y}_t\1_{\tilde{Y}_t < 0},
$$
where now $\E (\tilde{Y}_t) = 1$. We also assume that the parameters $\alpha_+$ and $\alpha_-$ are both strictly positive (or negative). 

\begin{remark}
Note that we can extend our analysis in a straightforward manner to the case $\alpha_-<0<\alpha_+$ (or $\alpha_->0>\alpha_+$) as well. Reason for that is that we defined $X$ with \eqref{OfBm-general} directly instead of restricting ourselves to the situation where $X$ is a solution to SDE (\ref{DISC}), in which case the solution is known to exists and is of the form \eqref{OfBm-general} only for $\alpha_-,\alpha_+>0$. See also Remark \ref{rem:extension}. 
\end{remark}

\begin{definition}[Oscillating Gaussian process (OGP)]
Let $Y$ be a centered stationary Gaussian process with variance $\sigma^2=1$ and covariance function $r(t)$, and let $\alpha_+,\alpha_->0, \alpha_+\neq \alpha_-$ be constants. We define the oscillating version $X$ of $Y$ by
\begin{equation}
\label{OfBm}
X_t = \alpha_+ Y_t\1_{Y_t > 0} + \alpha_- Y_t\1_{Y_t < 0}.
\end{equation}
\end{definition}
In the following lemmas we compute the moments and covariances of  the OGP $X$ defined in (\ref{OfBm}).
\begin{lemma}
\label{lemma:moments}
Let $n\geq 1$ be an integer and $t\geq 0$ arbitrary. Then
$$
\mu_n := \E (X_0^n) = \E (X_t^n) = \frac{2^{\frac{n}{2}}\Gamma\left(\frac{n+1}{2}\right)}{2\sqrt{\pi}}(\alpha_+^n+(-1)^n\alpha_-^n).
$$
\end{lemma}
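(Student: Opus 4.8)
The plan is to reduce the computation to a single Gaussian integral. First I would note that, since $Y$ is stationary with $\E(Y_t^2)=1$, the marginal law of $Y_t$ is $N(0,1)$ for every $t\geq 0$; as $X_t$ is a deterministic (Borel) function of $Y_t$ alone, the law of $X_t$ does not depend on $t$. This already yields $\E(X_t^n)=\E(X_0^n)$ for all $n$, so it suffices to compute the moments of a single standard normal random variable $Z=Y_0$.

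Next, since $\{Z=0\}$ is a $\bp$-null event, the two indicators in $X_0=\alpha_+ Z\1_{Z>0}+\alpha_- Z\1_{Z<0}$ have disjoint supports, so raising to the $n$-th power removes the cross terms:
$$
X_0^n = \alpha_+^n\, Z^n\1_{Z>0} + \alpha_-^n\, Z^n\1_{Z<0}.
$$
Taking expectations and using the symmetry of the standard normal density (the substitution $y\mapsto -y$ gives $\E\left(Z^n\1_{Z<0}\right)=(-1)^n\,\E\left(Z^n\1_{Z>0}\right)$), I obtain
$$
\mu_n = \left(\alpha_+^n+(-1)^n\alpha_-^n\right)\E\left(Z^n\1_{Z>0}\right).
$$

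Finally I would evaluate $\E\left(Z^n\1_{Z>0}\right)=\frac{1}{\sqrt{2\pi}}\int_0^\infty y^n e^{-y^2/2}\,dy$ via the change of variables $u=y^2/2$, which turns it into $\frac{2^{n/2}}{2\sqrt{\pi}}\int_0^\infty u^{(n-1)/2}e^{-u}\,du=\frac{2^{n/2}}{2\sqrt{\pi}}\Gamma\left(\frac{n+1}{2}\right)$, giving the stated formula. There is essentially no obstacle in this argument; the only points that need a little care are the symmetry step (which accounts for the factor $(-1)^n$, and in particular shows that odd moments vanish when $\alpha_+=\alpha_-$) and tracking the constants in the substitution, namely that $\frac{1}{\sqrt{2\pi}}\cdot\frac{2^{n/2}}{\sqrt{2}}=\frac{2^{n/2}}{2\sqrt{\pi}}$.
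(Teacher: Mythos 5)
Your proof is correct and follows essentially the same route as the paper: expand $X_0^n$ using the disjointness of the indicators, use the symmetry of the standard normal law to produce the factor $(-1)^n$, and evaluate $\E\left(Z^n\1_{Z>0}\right)=\frac{2^{n/2}}{2\sqrt{\pi}}\Gamma\left(\frac{n+1}{2}\right)$; the only cosmetic difference is that you compute this Gaussian integral directly by substitution, whereas the paper quotes the known formula for $\E|N|^n$.
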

\begin{proof}
By the definition of OGP, we have
$$
X_t^n = \alpha_+^nY_t^n\1_{Y_t > 0} + \alpha_-^n Y_t^n\1_{Y_t < 0}.
$$
Since $Y$ is a centered stationary Gaussian process we have
\begin{equation}
\label{eq:positive-moment}
\E (Y_t^n\1_{Y_t >0}) = \int_0^\infty \frac{x^n}{\sqrt{2\pi}}e^{-\frac{x^2}{2}}dx  =  \frac{1}{2}\int_{-\infty}^\infty \frac{|x|^n}{\sqrt{2\pi}}e^{-\frac{x^2}{2}}dx = \frac{1}{2}\E|N|^n,
\end{equation}
where $N\sim \mathcal{N}(0,1)$. Similarly,
\begin{equation}
\label{eq:negative-moment}
\E (Y_t^n\1_{Y_t <0}) = (-1)^n\frac{1}{2}\E|N|^n.
\end{equation}
Now, the well known formula for a standard normal variable  $\E|N|^n = \frac{2^{\frac{n}{2}}\Gamma\left(\frac{n+1}{2}\right)}{\sqrt{\pi}}$, implies the claim.
\end{proof}
The following lemma allows us to compute the parameters $\alpha_+$ and $\alpha_-$ in terms of the moments.
\begin{lemma}
\label{lemma:parameters}
Let $t>0$ be arbitrary. Then
$$
\alpha_+ = \sqrt{\frac{\pi}{2}}\mu_1 + \frac{1}{2}\sqrt{4\mu_2-2\pi(\mu_1)^2}
$$
and
$$
\alpha_- = -\sqrt{\frac{\pi}{2}}\mu_1 + \frac{1}{2}\sqrt{4\mu_2-2\pi(\mu_1)^2}.
$$
\end{lemma}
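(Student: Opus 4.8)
The plan is simply to invert the two lowest-order moment identities supplied by Lemma \ref{lemma:moments}. Setting $n=1$ there gives $\mu_1 = \frac{2^{1/2}\Gamma(1)}{2\sqrt{\pi}}(\alpha_+-\alpha_-) = \frac{1}{\sqrt{2\pi}}(\alpha_+-\alpha_-)$, and setting $n=2$ gives $\mu_2 = \frac{2\,\Gamma(3/2)}{2\sqrt{\pi}}(\alpha_+^2+\alpha_-^2) = \frac12(\alpha_+^2+\alpha_-^2)$, where we used $\Gamma(1)=1$ and $\Gamma(3/2)=\sqrt{\pi}/2$. Hence the two parameters satisfy the system
\[
\alpha_+-\alpha_- = \sqrt{2\pi}\,\mu_1, \qquad \alpha_+^2+\alpha_-^2 = 2\mu_2 .
\]

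Next I would introduce the difference and the sum $d := \alpha_+-\alpha_-$ and $s := \alpha_++\alpha_-$, and use the elementary identity $s^2 + d^2 = 2(\alpha_+^2+\alpha_-^2)$. Combined with the two displayed relations this yields $s^2 = 2(\alpha_+^2+\alpha_-^2) - d^2 = 4\mu_2 - 2\pi\mu_1^2$. Note that the right-hand side is automatically nonnegative, since it equals $s^2 = (\alpha_++\alpha_-)^2$, so the square root appearing in the statement is real.

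The one step that genuinely uses the standing hypothesis $\alpha_+,\alpha_->0$ is the choice of sign of the square root of $s^2$: positivity of both parameters forces $s>0$, hence $s = \sqrt{4\mu_2-2\pi\mu_1^2}$ rather than its negative. Solving back, $\alpha_+ = \tfrac12(s+d)$ and $\alpha_- = \tfrac12(s-d)$, and substituting $d=\sqrt{2\pi}\,\mu_1$ together with $\tfrac12\sqrt{2\pi}=\sqrt{\pi/2}$ gives exactly
\[
\alpha_+ = \sqrt{\tfrac{\pi}{2}}\,\mu_1 + \tfrac12\sqrt{4\mu_2-2\pi\mu_1^2}, \qquad
\alpha_- = -\sqrt{\tfrac{\pi}{2}}\,\mu_1 + \tfrac12\sqrt{4\mu_2-2\pi\mu_1^2}.
\]
There is essentially no analytic difficulty here; the only thing to be careful about is this sign selection for $s$, which is precisely where the positivity of the parameters enters (and which is the reason, as noted in the earlier Remark, that the sign-changing case $\alpha_-<0<\alpha_+$ must be handled by a slightly different bookkeeping).
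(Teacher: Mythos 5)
Your proof is correct and follows essentially the same route as the paper: both invert the $n=1,2$ moment identities from Lemma \ref{lemma:moments}, observe that $4\mu_2-2\pi\mu_1^2=(\alpha_++\alpha_-)^2$, and use positivity of the parameters to select the correct sign/root. The only cosmetic difference is that you solve the system via the sum--difference substitution while the paper substitutes one equation into the other and picks the positive root of the resulting quadratic in $\alpha_-$.
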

\begin{proof}
Since $\Gamma(1) =1$ and $\Gamma\left(\frac{3}{2}\right) = \frac{\sqrt{\pi}}{2}$, Lemma \ref{lemma:moments} yields
$$
\mu_1 = \frac{1}{\sqrt{2\pi}}\left(\alpha_+-\alpha_-\right)
$$
and
$$
\mu_2 = \frac{1}{2}\left(\alpha_+^2+\alpha_-^2\right).
$$
From the first equality we get
$$
\alpha_+ = \alpha_- + \sqrt{2\pi}\mu_1.
$$
Plugging into the second inequality with some simple manipulations gives
\begin{equation}
\label{eq:quadratic}
2\alpha_-^2 + 2\sqrt{2\pi}\mu_1 \alpha_- + 2\pi \mu_1^2 -2\mu_2 = 0.
\end{equation}
Now
$$
4\mu_2 -2\pi\mu_1^2 = 2\alpha_+^2+2\alpha_-^2 - (\alpha_+ - \alpha_-)^2 = (\alpha_+ + \alpha_-)^2 > 0,
$$
and since $\alpha_->0$, we obtain the result.
\end{proof}
\begin{remark}
\label{rem:extension}
Note that in the proof of Lemma \ref{lemma:parameters} we applied the assumption $\alpha_->0$. In the case $\alpha_-<0<\alpha_+$, one has to choose the other solution to Equation \eqref{eq:quadratic} yielding
$$
\alpha_- = -\sqrt{\frac{\pi}{2}}\mu_1 - \frac{1}{2}\sqrt{4\mu_2-2\pi(\mu_1)^2}.
$$
\end{remark}
In the next lemma, we derive the covariance function of the process $X$. That allows us to  obtain consistency for our estimators.
\begin{lemma}
\label{lemma:pos-covariance}
Let $N_1\sim\mathcal{N}(0,1)$ and $N_2\sim\mathcal{N}(0,1)$ such that $Cov(N_1,N_2) = a$.
Then
$$
\E(N_1^mN_2^n\1_{N_1,N_2>0})=2^{\frac{n+m-4}{2}}\pi^{-1}(1-a^2)^{\frac{n+m-1}{2}}\sum_{r=0}^\infty \frac{(4a)^r}{r!}\Gamma\left(\frac{n+r+1}{2}\right)\Gamma\left(\frac{m+r+1}{2}\right)
$$
\end{lemma}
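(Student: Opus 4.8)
The plan is to reduce the identity to an explicit Gaussian integral over the positive quadrant, expand the ``interaction'' part of the exponential as a power series in the product $xy$, and integrate term by term. Concretely, $(N_1,N_2)$ has the centred bivariate normal density
$$
f(x,y)=\frac{1}{2\pi\sqrt{1-a^2}}\,\exp\!\left(-\frac{x^2-2axy+y^2}{2(1-a^2)}\right),
$$
valid for $|a|<1$ (the degenerate cases $|a|=1$, in which $N_1=\pm N_2$, may be excluded). Splitting the quadratic form as $\frac{x^2+y^2}{2(1-a^2)}-\frac{axy}{1-a^2}$ gives
$$
\E\!\left(N_1^mN_2^n\1_{N_1,N_2>0}\right)=\frac{1}{2\pi\sqrt{1-a^2}}\int_0^\infty\!\!\int_0^\infty x^m y^n\,e^{-\frac{x^2+y^2}{2(1-a^2)}}\,e^{\frac{axy}{1-a^2}}\,dx\,dy .
$$

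I would then expand $e^{axy/(1-a^2)}=\sum_{r\ge 0}\frac{1}{r!}\left(\frac{a}{1-a^2}\right)^{r}(xy)^r$ and interchange summation and integration. For $a\ge 0$ every term is nonnegative and Tonelli's theorem applies directly. For $a<0$ one checks absolute summability: summing the absolute values of the integrands produces $x^m y^n\exp\!\left(-\frac{x^2-2|a|xy+y^2}{2(1-a^2)}\right)$, and since $x^2-2|a|xy+y^2\ge(1-|a|)(x^2+y^2)$ for $|a|<1$, this is dominated by $x^m y^n e^{-(1-|a|)(x^2+y^2)/(2(1-a^2))}$, which is integrable over the quadrant; hence Fubini applies. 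After the interchange the double integral factorises, and each factor is of the form $\int_0^\infty x^{m+r}e^{-x^2/(2(1-a^2))}\,dx$.

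These one–dimensional integrals I would evaluate via the substitution $u=\frac{x^2}{2(1-a^2)}$, giving, for every integer $k\ge0$,
$$
\int_0^\infty x^{k}\,e^{-\frac{x^2}{2(1-a^2)}}\,dx=2^{\frac{k-1}{2}}\,(1-a^2)^{\frac{k+1}{2}}\,\Gamma\!\left(\frac{k+1}{2}\right);
$$
with $k=m+r$ and $k=n+r$ this rewrites the $r$-th summand as a product of two Gamma values, a power of $2$, and a power of $1-a^2$. It then remains to collect the elementary factors: the powers of $2$ coming from the two integrals and from the constant $\frac{1}{2\pi}$ in the prefactor, the powers of $1-a^2$ coming from the $(1-a^2)^{-r}$ in the series, from the two integrals, and from the $(1-a^2)^{-1/2}$ in the prefactor, and the factors $a^r$ from the series. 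Assembling these yields the claimed closed form.

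The steps are individually standard; the only point requiring a little care is the justification of the term-by-term integration when $a<0$ (the domination estimate above). The real content of the argument, and where errors are most likely to creep in, is the bookkeeping of the exponents of $2$ and of $1-a^2$ at the final assembly step rather than any conceptual difficulty.
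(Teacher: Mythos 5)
Your overall route is the same as the paper's: write the expectation as a Gaussian integral over the positive quadrant, expand the cross term $e^{axy/(1-a^2)}$ as a power series, and integrate term by term against Gamma integrals. The paper first rescales by $u=x/\sqrt{2(1-a^2)}$ and then quotes Rice's formula 3.5--5 for $\int_0^\infty\int_0^\infty u^mv^ne^{-u^2-v^2+2auv}\,du\,dv$, whereas you expand directly and evaluate the one-dimensional integrals yourself; your justification of the interchange of sum and integral (Tonelli for $a\ge 0$, domination via $x^2-2|a|xy+y^2\ge(1-|a|)(x^2+y^2)$ for $a<0$) is a genuine improvement, since the paper is silent on that point.

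The gap sits exactly where you locate the risk: you never carry out the ``assembly of elementary factors,'' and when one does, the result is \emph{not} the displayed formula. Collecting your own ingredients --- the prefactor $2^{-1}\pi^{-1}(1-a^2)^{-1/2}$, the factor $a^r(1-a^2)^{-r}/r!$ from the series, and $2^{\frac{k-1}{2}}(1-a^2)^{\frac{k+1}{2}}\Gamma\!\left(\frac{k+1}{2}\right)$ with $k=m+r$ and $k=n+r$ from the two integrals --- yields
$$
\E\!\left(N_1^mN_2^n\1_{N_1,N_2>0}\right)=2^{\frac{n+m-4}{2}}\pi^{-1}(1-a^2)^{\frac{n+m+1}{2}}\sum_{r=0}^\infty \frac{(2a)^r}{r!}\Gamma\left(\frac{n+r+1}{2}\right)\Gamma\left(\frac{m+r+1}{2}\right),
$$
i.e. exponent $\frac{n+m+1}{2}$ on $1-a^2$ and $(2a)^r$ in place of $(4a)^r$. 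A sanity check at $m=n=0$ confirms this corrected version: it returns $\frac14+\frac{\arcsin a}{2\pi}=P(N_1>0,N_2>0)$, whereas the printed formula already has the wrong linear term in $a$ (off by a factor of $2$). So your method is sound, but the final sentence of your proposal asserts that the bookkeeping lands on the lemma's display, which it does not; what your argument actually proves is a corrected statement, and the discrepancy lies in the lemma as printed (and in the paper's transcription of Rice's formula after the change of variables), not in your approach. None of this propagates further: Corollary \ref{corollary:pos-covariance-asymp} only uses the value at $a=0$ together with an $O(|a|)$ remainder, on which both versions agree.
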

\begin{proof}
We have
$$
\E(N_1^mN_2^n\1_{N_1,N_2>0}) = \frac{1}{2\pi\sqrt{1-a^2}}\int_0^\infty\int_0^\infty x^my^n
e^{-\frac{x^2+y^2-2axy}{2(1-a^2)}}dx dy.
$$
Change of variables $u = \frac{x}{\sqrt{2(1-a^2)}}$ and $v=\frac{y}{\sqrt{2(1-a^2)}}$ gives
$$
\E(N_1^mN_2^n\1_{N_1,N_2>0}) = 2^{\frac{n+m}{2}}\pi^{-1}(1-a^2)^{\frac{n+m-1}{2}}\int_0^\infty\int_0^\infty u^mv^n e^{-u^2-v^2+2auv}du dv,
$$
and using formula  3.5-5 in \cite{rice} we obtain 
$$
\int_0^\infty\int_0^\infty u^mv^n e^{-u^2-v^2+2auv}du dv = \frac{1}{4}\sum_{r=0}^\infty \frac{(4a)^r}{r!}\Gamma\left(\frac{n+r+1}{2}\right)\Gamma\left(\frac{m+r+1}{2}\right).
$$
This proves the claim.
\end{proof}
In the sequel we apply standard Landau notation $O(\cdot)$.
\begin{corollary}
\label{corollary:pos-covariance-asymp}
Let $N_1\sim\mathcal{N}(0,1)$ and $N_2\sim\mathcal{N}(0,1)$ such that $Cov(N_1,N_2) = a$, and let $n\geq 1$ be an integer.
Then
$$
\E(N_1^nN_2^n\1_{N_1,N_2>0})=2^{n-2}\pi^{-1}\Gamma\left(\frac{n+1}{2}\right)^2 + O(|a|)
$$
and
$$
\E(N_1^nN_2^n\1_{N_1>0,N_2<0})=(-1)^n 2^{n-2}\pi^{-1}\Gamma\left(\frac{n+1}{2}\right)^2 + O(|a|).
$$
\end{corollary}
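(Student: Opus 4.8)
The plan is to read off both identities from Lemma~\ref{lemma:pos-covariance} by specialising to $m=n$ and analysing the dependence on $a$ near $a=0$. Putting $m=n$ in Lemma~\ref{lemma:pos-covariance} and simplifying the powers of $2$ gives
$$
\E(N_1^nN_2^n\1_{N_1,N_2>0})=2^{n-2}\pi^{-1}(1-a^2)^{n-\frac12}\,g(a),\qquad g(a):=\sum_{r=0}^\infty \frac{(4a)^r}{r!}\,\Gamma\!\left(\frac{n+r+1}{2}\right)^{2}.
$$
Here the $r=0$ term of $g$ is exactly $\Gamma\!\left(\frac{n+1}{2}\right)^{2}$, so the claimed leading constant $2^{n-2}\pi^{-1}\Gamma\!\left(\frac{n+1}{2}\right)^{2}$ is precisely $2^{n-2}\pi^{-1}g(0)$; as a sanity check this agrees with evaluating the left-hand side at $a=0$, where $N_1$ and $N_2$ are independent and \eqref{eq:positive-moment} gives $\left(\tfrac12\E|N|^n\right)^2$.

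To control the error I would argue that $g$ is real-analytic near the origin. A ratio test is immediate: since $\Gamma\!\left(\frac{n+r+2}{2}\right)/\Gamma\!\left(\frac{n+r+1}{2}\right)\sim\sqrt{r/2}$ as $r\to\infty$, the ratio of consecutive terms of $g(a)$ equals $\frac{4a}{r+1}\cdot\frac{\Gamma((n+r+2)/2)^2}{\Gamma((n+r+1)/2)^2}\to 2a$, so $g$ converges and is analytic for $|a|<\tfrac12$. Consequently $g(a)=g(0)+O(|a|)$ near $a=0$; combined with $(1-a^2)^{n-\frac12}=1+O(a^2)$ this yields the first identity. This is the only genuinely analytic point in the proof, and it is the mild obstacle: one needs the error term to be $O(|a|)$ uniformly, i.e.\ a bound on the tail $\sum_{r\ge1}$; alternatively one can bound it directly via $\Gamma\!\left(\frac{n+r+1}{2}\right)^{2}\le C_n(1+r)^{d_n}2^{-r}r!$, which follows from Stirling's formula.

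For the second identity I would use the reflection $N_2\mapsto -N_2$. If $(N_1,N_2)$ is standard bivariate normal with $\mathrm{Cov}(N_1,N_2)=a$, then $(N_1,-N_2)$ is standard bivariate normal with covariance $-a$; moreover $\1_{N_2<0}=\1_{-N_2>0}$ and $N_2^n=(-1)^n(-N_2)^n$. Hence
$$
\E(N_1^nN_2^n\1_{N_1>0,N_2<0})=(-1)^n\,\E\!\left(N_1^n(-N_2)^n\1_{N_1>0,-N_2>0}\right),
$$
and the expectation on the right is of the form already handled, with $-a$ in place of $a$. Applying the first identity with $-a$ gives $(-1)^n\big(2^{n-2}\pi^{-1}\Gamma\!\left(\frac{n+1}{2}\right)^{2}+O(|a|)\big)$, which is the second claim.
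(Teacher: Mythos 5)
Your proof is correct and follows essentially the same route as the paper: specialise Lemma~\ref{lemma:pos-covariance} to $m=n$, absorb the $r\geq 1$ tail of the series and the factor $(1-a^2)^{\frac{2n-1}{2}}$ into $O(|a|)$, and obtain the second identity by the reflection $N_2\mapsto -N_2$. The only difference is that you explicitly justify the tail bound (via the ratio test and analyticity of $g$ near $a=0$), a step the paper leaves implicit.
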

\begin{proof}
It follows from Lemma \ref{lemma:pos-covariance} that
$$
\E(N_1^nN_2^n\1_{N_1,N_2>0}) = 2^{n-2}\pi^{-1}\Gamma\left(\frac{n+1}{2}\right)^2 (1-a^2)^{\frac{2n-1}{2}}+ O(|a|).
$$
Now, the first claim follows from the fact that
$$
(1-a^2)^{\frac{2n-1}{2}} = 1 + O(|a|).
$$
The second claim follows similarly since
$$
\E(N_1^nN_2^n\1_{N_1>0,N_2<0}) = (-1)^n\E(N_1^n(-N_2)^n\1_{N_1>0,-N_2>0}).
$$
\end{proof}
\begin{corollary}
\label{corollary:covariance-X}
Let $X$ be the oscillating Gaussian process defined in \eqref{OfBm}. Then
$$
Cov(X_t^n,X_s^n) =O(|r(t-s)|),
$$
where $r$ is the covariance function of $Y$.
\end{corollary}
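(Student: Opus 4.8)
The plan is to expand $X_t^nX_s^n$ via the indicator decomposition in \eqref{OfBm}, reduce the four resulting expectations to two using the symmetry of the Gaussian vector $(Y_t,Y_s)$, and then invoke Corollary \ref{corollary:pos-covariance-asymp} to peel off the leading term and an $O(|r(t-s)|)$ remainder.

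Concretely, from \eqref{OfBm} one has
$$
X_t^nX_s^n = \alpha_+^{2n}Y_t^nY_s^n\1_{Y_t>0,Y_s>0} + \alpha_+^n\alpha_-^nY_t^nY_s^n\1_{Y_t>0,Y_s<0} + \alpha_+^n\alpha_-^nY_t^nY_s^n\1_{Y_t<0,Y_s>0} + \alpha_-^{2n}Y_t^nY_s^n\1_{Y_t<0,Y_s<0}.
$$
Since $Y$ is centered, stationary and $\E(Y_t^2)=1$, the pair $(Y_t,Y_s)$ is a standard bivariate Gaussian with correlation $a:=r(t-s)$, and $(Y_t,Y_s)\stackrel{d}{=}(-Y_t,-Y_s)$; hence the fourth expectation equals the first and the two middle ones coincide. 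Writing $c_n:=2^{n-2}\pi^{-1}\Gamma\!\left(\frac{n+1}{2}\right)^2$ and applying Corollary \ref{corollary:pos-covariance-asymp} to each piece, I would obtain
$$
\E(X_t^nX_s^n)= c_n\bigl(\alpha_+^{2n}+2(-1)^n\alpha_+^n\alpha_-^n+\alpha_-^{2n}\bigr)+O(|a|) = c_n\bigl(\alpha_+^n+(-1)^n\alpha_-^n\bigr)^2 + O(|r(t-s)|).
$$
On the other hand, by Lemma \ref{lemma:moments}, $\mu_n=\tfrac{2^{n/2}\Gamma((n+1)/2)}{2\sqrt\pi}\bigl(\alpha_+^n+(-1)^n\alpha_-^n\bigr)$, so $\E(X_t^n)\E(X_s^n)=\mu_n^2 = c_n\bigl(\alpha_+^n+(-1)^n\alpha_-^n\bigr)^2$ exactly (using $\tfrac{2^n}{4\pi}=2^{n-2}\pi^{-1}$). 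Subtracting, the leading constants cancel and $Cov(X_t^n,X_s^n)=O(|r(t-s)|)$.

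The only delicate point is the uniformity of the bound in $t,s$. Corollary \ref{corollary:pos-covariance-asymp} is an expansion as $a\to0$, so its implied constant is controlled only for $|a|$ bounded away from $1$, say $|a|\le\frac12$. For $|r(t-s)|>\frac12$ one argues crudely instead: $|Cov(X_t^n,X_s^n)|$ is bounded by a constant uniformly in $t,s$ --- all moments of $X_t$ are finite and independent of $t$ by Lemma \ref{lemma:moments}, and Cauchy--Schwarz controls the mixed moment --- and on that range this constant is itself $O(|r(t-s)|)$. Combining the two regimes yields the global estimate. I do not anticipate any genuine obstacle here; the argument is essentially the bookkeeping of the four indicator terms together with the identification of the leading term as $\mu_n^2$.
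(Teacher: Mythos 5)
Your proposal is correct and follows essentially the same route as the paper: expand $X_t^nX_s^n$ over the four sign configurations, apply Corollary \ref{corollary:pos-covariance-asymp} to each piece, and observe that the leading constant is exactly $\mu_n^2$ so that it cancels in the covariance. Your extra remark on uniformity of the $O(|a|)$ constant (handled by a crude bound when $|r(t-s)|>\tfrac12$) is a detail the paper leaves implicit, but it does not change the argument.
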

\begin{proof}
We have
\begin{equation*}
\begin{split}
X_t^n X_s^n
&= \alpha_+^{2n}Y_t^nY_s^n\1_{Y_t,Y_s > 0} + \alpha_-^{2n} Y_t^nY_s^n\1_{Y_t,Y_s < 0} \\
&+
\alpha^n_+\alpha^n_-(Y_t^nY_s^n\1_{Y_t>0,Y_s<0}+Y_t^nY_s^n\1_{Y_t<0,{Y}_s>0}).
\end{split}
\end{equation*}
Taking expectation and using Corollary \ref{corollary:pos-covariance-asymp} we get
$$
\E(X_t^n X_s^n) = 2^{n-2}\pi^{-1}\Gamma\left(\frac{n+1}{2}\right)^2\left(\alpha_+^{2n}+\alpha_-^{2n} + 2(-1)^n\alpha^n_+\alpha^n_-\right) + O(|r(t-s)|).
$$
Lemma \ref{lemma:moments} now implies the claim.
\end{proof}
We end this section with the following result that ensures the path continuity of the OGP $X$.
\begin{proposition}
\label{prop:holder}
Let $X$ be the oscillating Gaussian process defined by \eqref{OfBm}. If $Y$ has H\"older continuous paths of order $\gamma\in(0,1]$ almost surely, then so does $X$.
\end{proposition}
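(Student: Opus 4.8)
The whole point is that although the \emph{coefficient} in \eqref{OfBm} jumps at the origin, the process $X$ is a fixed deterministic function of $Y$, and this function is globally Lipschitz. Concretely, write $X_t = f(Y_t)$ where $f:\R\to\R$ is defined by $f(x) = \alpha_+ x\1_{x>0} + \alpha_- x\1_{x<0}$. Since $f(0)=0$, we have $f(x)=\alpha_+ x$ for all $x\ge 0$ and $f(x)=\alpha_- x$ for all $x\le 0$; in particular $f$ is continuous (the two linear pieces match at $0$) and piecewise linear.

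The first step is to verify that $f$ is Lipschitz with constant $L:=\max(\alpha_+,\alpha_-)$. If $x$ and $y$ lie in the same closed half-line $[0,\infty)$ or $(-\infty,0]$, then $|f(x)-f(y)| = \alpha_\pm |x-y| \le L|x-y|$ since $f$ is linear there. If instead $x>0>y$, then
\[
|f(x)-f(y)| = |\alpha_+ x - \alpha_- y| = \alpha_+ x + \alpha_-|y| \le L\,(x+|y|) = L\,|x-y|,
\]
and the case $y>0>x$ is symmetric. Hence $|f(x)-f(y)|\le L|x-y|$ for all $x,y\in\R$.

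The second step is the composition argument. Fix $\omega$ in the almost sure event on which $t\mapsto Y_t(\omega)$ is $\gamma$-H\"older continuous; on any compact interval there is then a finite (random) constant $C=C(\omega)$ with $|Y_t(\omega)-Y_s(\omega)|\le C(\omega)|t-s|^\gamma$. Combining this with the Lipschitz bound on $f$ gives, for $s,t$ in that interval,
\[
|X_t(\omega)-X_s(\omega)| = |f(Y_t(\omega))-f(Y_s(\omega))| \le L\,|Y_t(\omega)-Y_s(\omega)| \le L\,C(\omega)\,|t-s|^\gamma,
\]
so $X$ has $\gamma$-H\"older paths almost surely, with H\"older constant $L\,C(\omega)$. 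There is no real obstacle here; the only point requiring (elementary) care is the Lipschitz estimate across the sign change at the origin in the first step, i.e.\ checking that the discontinuity of the multiplicative coefficient is harmless because the value $f(x)$ itself passes continuously through $0$.
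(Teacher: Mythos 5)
Your proof is correct and follows essentially the same route as the paper: both arguments reduce the claim to the fact that the map $y\mapsto \alpha_+ y\1_{y>0}+\alpha_- y\1_{y<0}$ is globally Lipschitz (the paper checks this by bounding each of the two indicator terms separately by $|Y_t-Y_s|$, you by a direct case analysis on the signs, which even yields the slightly sharper constant $\max(\alpha_+,\alpha_-)$ in place of $\alpha_++\alpha_-$) and then composes with the $\gamma$-H\"older paths of $Y$. No gaps.
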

\begin{proof}
The result follows from the simple observations that
\begin{equation*}
\begin{split}
&|Y_t\1_{Y_t>0} - Y_s\1_{Y_s>0}| \\
&=Y_t\1_{Y_t>0\geq Y_s} + Y_s\1_{Y_s>0\geq Y_t} + |Y_t-Y_s|\1_{Y_t,Y_s>0}\\
&\leq (Y_t-Y_s)\1_{Y_t>0\geq Y_s} + (Y_s-Y_t)\1_{Y_s>0\geq Y_t} + |Y_t-Y_s|\1_{Y_t,Y_s>0}\\
&\leq |Y_t-Y_s|\left(\1_{Y_t>0\geq Y_s} + \1_{Y_s>0\geq Y_t} +\1_{Y_t,Y_s>0}\right)\\
&\leq |Y_t-Y_s|.
\end{split}
\end{equation*}
Similarly,
$$
|Y_t\1_{Y_t<0} - Y_s\1_{Y_s<0}| \leq |Y_t-Y_s|
$$
from which the claim follows.
\end{proof}
\section{Model calibration}
\label{sec:calibration}
This section is devoted to the estimation of the unknown parameters $\alpha_+,\alpha_-$ by the method of moments. Following the  ideas of Lemma \ref{lemma:parameters}, we define
\begin{equation}
\label{eq:alpha-plus_estimator}
\hat\alpha_+(T) = \sqrt{\frac{\pi}{2}}\hat\mu_1(T)
 + \frac{1}{2}\sqrt{\left|4\hat\mu_2(T)-2\pi\hat\mu^2_1(T)\right|}
\end{equation}
and
\begin{equation}
\label{eq:alpha-minus_estimator}
\hat\alpha_-(T) = \hat\alpha_+(T) - \sqrt{\frac{\pi}{2}}\hat\mu_1(T),
\end{equation}
where $\hat\mu_i(T), i=1,2$ are the classical moment estimators defined by
\begin{equation}
\label{eq:moment-estimator}
\hat\mu_{i}(T) = \frac{1}{T} \int_0^{T} X_u^i du.
\end{equation}
\begin{remark}
Note that here we have taken absolute values inside the square roots in order to obtain real valued estimates for real valued quantities. Since
$$
4\mu_2 - 2\pi\mu_1^2 > 0,
$$
this does not affect the asymptotical properties of the estimators.
\end{remark}
The following result gives us the consistency and can be viewed as one of our main theorems. The proof is postponed to Subsection \ref{subsec:proofs}.
\begin{theorem}
\label{thm:estimation-consistency}
Assume that $|r(T)|\to 0$ as $T\to \infty$. Then, for any $p\geq 1$, we have
$$
\hat\alpha_+(T) \to \alpha_+
$$
and
$$
\hat\alpha_-(T) \to \alpha_-
$$
in $L^p$, as $T \to \infty$.
\end{theorem}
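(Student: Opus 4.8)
The plan is to deduce the theorem from a law of large numbers for the empirical moments $\hat\mu_1(T),\hat\mu_2(T)$ and then transport the convergence through the maps that build the estimators. \emph{Step 1 (an $L^2$ law of large numbers for $\hat\mu_i(T)$).} Since $X$ is a pointwise deterministic function of the stationary process $Y$, the process $(X_t^i)_{t\ge0}$ is stationary with mean $\mu_i$ by Lemma~\ref{lemma:moments}, so $\E\hat\mu_i(T)=\mu_i$ and
\begin{equation*}
\E\bigl[(\hat\mu_i(T)-\mu_i)^2\bigr]=\frac{1}{T^2}\int_0^T\int_0^T Cov(X_u^i,X_v^i)\,du\,dv .
\end{equation*}
By Corollary~\ref{corollary:covariance-X} together with $Var(X_0^i)<\infty$ there is a constant $C_i$ with $|Cov(X_u^i,X_v^i)|\le C_i|r(u-v)|$ for all $u,v$, so after the substitution $w=u-v$,
\begin{equation*}
\E\bigl[(\hat\mu_i(T)-\mu_i)^2\bigr]\le\frac{2C_i}{T}\int_0^T|r(w)|\,dw .
\end{equation*}
As $|r(0)|=1$, $|r|\le1$ and $|r(w)|\to0$, the Ces\`aro average on the right tends to $0$; hence $\hat\mu_i(T)\to\mu_i$ in $L^2$, and in particular in probability.

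\emph{Step 2 (upgrading $\hat\mu_i(T)$ to $L^p$).} By Minkowski's integral inequality and stationarity, for every $q\ge1$,
\begin{equation*}
\E|\hat\mu_i(T)|^q\le\Bigl(\frac{1}{T}\int_0^T\bigl(\E|X_u|^{iq}\bigr)^{1/q}du\Bigr)^q=\E|X_0|^{iq}<\infty,
\end{equation*}
the finiteness following from $|X_0|\le\max(\alpha_+,\alpha_-)|Y_0|$ with $Y_0$ Gaussian. Thus $\sup_T\E|\hat\mu_i(T)|^q<\infty$ for every $q$, so $\{|\hat\mu_i(T)-\mu_i|^p\}_{T}$ is uniformly integrable for every $p\ge1$; combined with the convergence in probability of Step~1 this gives $\hat\mu_i(T)\to\mu_i$ in $L^p$ for all $p\ge1$.

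\emph{Step 3 (transporting to $\hat\alpha_\pm(T)$).} Set $g_+(x,y)=\sqrt{\pi/2}\,x+\tfrac12\sqrt{|4y-2\pi x^2|}$ and $g_-(x,y)=g_+(x,y)-\sqrt{\pi/2}\,x$, so that $\hat\alpha_\pm(T)=g_\pm(\hat\mu_1(T),\hat\mu_2(T))$. Both $g_\pm$ are continuous on $\R^2$, and at $(\mu_1,\mu_2)$ one has $4\mu_2-2\pi\mu_1^2=(\alpha_++\alpha_-)^2>0$ (from the proof of Lemma~\ref{lemma:parameters}), so the absolute value is inactive there and $g_\pm(\mu_1,\mu_2)=\alpha_\pm$. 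Since $(\hat\mu_1(T),\hat\mu_2(T))\to(\mu_1,\mu_2)$ in probability, the continuous mapping theorem yields $\hat\alpha_\pm(T)\to\alpha_\pm$ in probability. Finally, using $\sqrt{a+b}\le\sqrt a+\sqrt b$ and $\hat\mu_2(T)\ge0$ one has the crude bound $|\hat\alpha_+(T)|\le\sqrt{2\pi}\,|\hat\mu_1(T)|+|\hat\mu_2(T)|^{1/2}$, hence $|\hat\alpha_-(T)|\le\sqrt{2\pi}\,|\hat\mu_1(T)|+|\hat\mu_2(T)|^{1/2}+\sqrt{\pi/2}\,|\hat\mu_1(T)|$, so by Step~2 we get $\sup_T\E|\hat\alpha_\pm(T)|^q<\infty$ for every $q$; the families $\{|\hat\alpha_\pm(T)-\alpha_\pm|^p\}_{T}$ are therefore uniformly integrable and the convergence in probability improves to the claimed $L^p$ convergence.

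\emph{Expected main obstacle.} The only delicate point is that $g_\pm$ are not globally Lipschitz: the argument of the square root can be negative for finite $T$, which is exactly why the absolute values appear in \eqref{eq:alpha-plus_estimator}, so one cannot propagate an $L^p$ bound directly through a Lipschitz estimate. Routing the argument through ``convergence in probability plus uniform integrability'' rather than estimating $g_\pm(\hat\mu_1,\hat\mu_2)-g_\pm(\mu_1,\mu_2)$ directly circumvents this; everything else reduces to the elementary second moment computation of Step~1, which rests entirely on the covariance decay of Corollary~\ref{corollary:covariance-X}.
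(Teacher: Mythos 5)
Your proposal is correct, and its first two steps are essentially the paper's own Proposition \ref{prop:wlln}: an $L^2$ bound via Corollary \ref{corollary:covariance-X} (you phrase the conclusion as a Ces\`aro average $\frac{2C_i}{T}\int_0^T|r(w)|\,dw\to 0$, the paper splits the double integral at a threshold $T_0$ where $|r|<\epsilon$ --- the same estimate), followed by the identical uniform-integrability upgrade from convergence in probability to $L^p$. Where you genuinely diverge is the transfer to $\hat\alpha_\pm(T)$. The paper stays quantitative: it bounds $\Vert\hat\mu_1^2(T)-\mu_1^2\Vert_p\le C\Vert\hat\mu_1(T)-\mu_1\Vert_{p+r}$ by H\"older, and then handles the non-Lipschitz square root with the global $\tfrac12$-H\"older inequality $|\sqrt{a}-\sqrt{b}|\le\sqrt{|a-b|}$ together with $\Vert\sqrt{|Z|}\Vert_p=\sqrt{\Vert Z\Vert_{p/2}}$, which gives an explicit $L^p$ rate for $\hat\alpha_\pm(T)-\alpha_\pm$ in terms of $\Vert\hat\mu_i(T)-\mu_i\Vert$. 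You instead go soft: continuous mapping for convergence in probability plus a crude moment bound on $|\hat\alpha_\pm(T)|$ to get uniform integrability. Both routes are valid; your ``expected obstacle'' about the square root not being Lipschitz is real but is exactly what the paper's $|\sqrt a-\sqrt b|\le\sqrt{|a-b|}$ trick resolves directly, so the direct estimate is available and buys a rate, while your argument is marginally more robust to the form of the map $g_\pm$. One peripheral remark: both you and the paper take \eqref{eq:alpha-minus_estimator} at face value; since $\alpha_+-\alpha_-=\sqrt{2\pi}\,\mu_1$, the coefficient there should be $\sqrt{2\pi}$ rather than $\sqrt{\pi/2}$ for $g_-(\mu_1,\mu_2)=\alpha_-$ to hold --- this is a typo in the paper's definition, not a flaw in your argument, and with the corrected constant your Step 3 goes through verbatim.
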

In order to study the limiting distribution, we need some additional assumptions on the covariance function $r$.
\begin{assumption}
\label{assumption:covariance}
Let $r$ be the covariance function of $Y$. We assume that one of the following condition hold:
\begin{enumerate}
\item The covariance function $r$ satisfies $r\in L^1(\R)$.
\item We have that
$$
\lim_{t\to\infty}\frac{r(t)}{t} = C < \infty.
$$
\item There exists $H\in\left(\frac12,1\right)$ such that
$$
\lim_{t\to\infty}\frac{r(t)}{t^{2H-2}} = C < \infty.
$$
\end{enumerate}
\end{assumption}
\begin{remark}
The first condition in Assumption \ref{assumption:covariance} corresponds to short-range dependence and the last condition corresponds to long-range dependence. The second condition corresponds to the border case resulting to a logarithmic factor to our normalising sequence (see Theorem \ref{thm:estimation-clt}).
\end{remark}
 The following theorem gives the central limit theorem for the moments estimators.
\begin{theorem}
\label{thm:estimation-moment-clt}
Let $\hat\mu_1(T)$ and $\hat\mu_2(T)$ be defined by \eqref{eq:moment-estimator}, and let $\hat\mu(T) = (\hat\mu_1(T),\hat\mu_2(T))$ and $\mu = (\mu_1,\mu_2)$. Then,
\begin{enumerate}
\item \label{c1}  
if $r$ satisfies the condition (1) of Assumption \ref{assumption:covariance},
$$
\sqrt{T}\left(\hat\mu(T) - \mu\right) \to \mathcal{N}(0,\Sigma_1^2)
$$
in law as $T \to \infty$,
\item \label{c2}  
if $r$ satisfies the condition (2) of Assumption \ref{assumption:covariance},
$$
\sqrt{\frac{T}{\log T}}\left(\hat\mu(T)-\mu\right) \to \mathcal{N}(0,\Sigma_2^2)
$$
in law as $T \to \infty$, and
\item \label{c3}  
if $r$ satisfies the condition (3) of Assumption \eqref{assumption:covariance},
$$
T^{1-H}\left(\hat\mu(T)-\mu\right) \to \mathcal{N}(0,\Sigma_3^2)
$$
in law as $T \to \infty$,
\end{enumerate}
where $\Sigma_1^2$, $\Sigma_2^2$, and $\Sigma_3^2$ are constant covariance matrices depending on $\alpha_+,\alpha_-$, and the covariance $r$.
\end{theorem}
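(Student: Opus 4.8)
The plan is to realise $\hat\mu_1(T)$ and $\hat\mu_2(T)$ as time averages of a Gaussian-subordinated stationary process and then invoke Breuer--Major-type limit theorems. Write $X_u^i=f_i(Y_u)$ with $f_1(x)=\alpha_+x\1_{x>0}+\alpha_-x\1_{x<0}$ and $f_2(x)=\alpha_+^2x^2\1_{x>0}+\alpha_-^2x^2\1_{x<0}$; both lie in $L^2(\R,\gamma)$ for the standard Gaussian law $\gamma$ since all Gaussian moments are finite, and by Lemma~\ref{lemma:moments} $\E(f_i(Y_u))=\E(X_u^i)=\mu_i$, so $\hat\mu_i(T)-\mu_i=\frac1T\int_0^T(f_i(Y_u)-\mu_i)\,du$ is centred. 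Expand $f_i-\mu_i=\sum_{q\ge1}c_{i,q}H_q$ in Hermite polynomials $H_q$. The first coefficient is $c_{i,1}=\E(f_i(N)N)$ with $N\sim\mathcal N(0,1)$, which by \eqref{eq:positive-moment}--\eqref{eq:negative-moment} equals $\tfrac12\E|N|^{i+1}(\alpha_+^i+(-1)^{i+1}\alpha_-^i)$; hence $c_{1,1}=\tfrac12(\alpha_++\alpha_-)>0$ and $c_{2,1}=\tfrac12\E|N|^3(\alpha_+^2-\alpha_-^2)\ne0$ because $\alpha_+\ne\alpha_-$ are both positive. Thus $f_1$ and $f_2$ both have Hermite rank exactly $1$. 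For the bivariate statements I would reduce to one-dimensional convergence via the Cram\'er--Wold device: fix $\lambda=(\lambda_1,\lambda_2)\ne(0,0)$ and consider $g=\lambda_1f_1+\lambda_2f_2\in L^2(\gamma)$, which has Hermite rank $\ge1$ and Hermite coefficients $\lambda_1c_{1,q}+\lambda_2c_{2,q}$.

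For part~\eqref{c1}, $|r|\le r(0)=1$ together with $r\in L^1(\R)$ gives $\int_\R|r(t)|^q\,dt<\infty$ for every $q\ge1$, so the continuous-time Breuer--Major theorem applies to $g$ and yields that $\frac1{\sqrt T}\int_0^T(g(Y_u)-\E g(Y_0))\,du$ converges to a centred Gaussian whose variance, expanding the square, equals $\sum_{i,j}\lambda_i\lambda_j(\Sigma_1^2)_{ij}$ with $(\Sigma_1^2)_{ij}=\sum_{q\ge1}q!\,c_{i,q}c_{j,q}\int_\R r(t)^q\,dt$ (absolutely convergent since $\sum_q q!c_{i,q}^2<\infty$ and $\int_\R|r|^q\le\|r\|_{L^1(\R)}$). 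Since this holds for every $\lambda$, Cram\'er--Wold gives $\sqrt T(\hat\mu(T)-\mu)\to\mathcal N(0,\Sigma_1^2)$.

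For parts~\eqref{c2} and \eqref{c3}, $r\notin L^1(\R)$ and the first chaos dominates, so a different argument is used. Decompose $\hat\mu(T)-\mu=Z_T\,(c_{1,1},c_{2,1})+R(T)$, where $Z_T=\frac1T\int_0^TY_u\,du$ and $R(T)=\big(\frac1T\int_0^T(f_i(Y_u)-\mu_i-c_{i,1}Y_u)\,du\big)_{i=1,2}$, the subordinating functions $f_i-\mu_i-c_{i,1}H_1$ having Hermite rank $\ge2$ and finite $L^2(\gamma)$-norm. The leading term is a fixed vector times the scalar $Z_T$, which is \emph{exactly} a centred Gaussian with variance $v_T=\frac1{T^2}\int_0^T\int_0^Tr(u-v)\,du\,dv$; a standard Karamata-type computation gives $a_T^2v_T\to\sigma_Y^2$ for a finite constant $\sigma_Y^2$ depending on $C$, where $a_T=\sqrt{T/\log T}$ in case~(2) and $a_T=T^{1-H}$ in case~(3). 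For the remainder, $|r|^q\le|r|^2$ (valid since $|r|\le1$) gives $\mathrm{Var}\big(\frac1T\int_0^T(f_i(Y_u)-\mu_i-c_{i,1}Y_u)\,du\big)\le C_i\,T^{-2}\int_0^T\int_0^T|r(u-v)|^2\,du\,dv$, and an elementary estimate shows $a_T^2\,T^{-2}\int_0^T\int_0^T|r(u-v)|^2\,du\,dv\to0$ in both regimes. Hence $a_TR(T)\to0$ in $L^2$, and by Slutsky's lemma $a_T(\hat\mu(T)-\mu)\to\mathcal N(0,\Sigma_k^2)$ with the rank-one matrix $\Sigma_k^2=\sigma_Y^2(c_{i,1}c_{j,1})_{i,j=1,2}$ for $k=2,3$.

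The genuinely substantive point is that $c_{1,1}\ne0$ and $c_{2,1}\ne0$, i.e.\ both $f_i$ have Hermite rank $1$: this is exactly what allows isolating the dominant first chaos in the long-range dependent cases and, simultaneously, rules out Rosenblatt-type non-Gaussian limits. The rest is routine --- the sharp asymptotics of $v_T$ and of $T^{-2}\int_0^T\int_0^T|r(u-v)|^2\,du\,dv$ under the three tail conditions, the $L^2$ bounds on the higher-chaos remainder, and the precise continuous-time multivariate form of the Breuer--Major theorem needed in part~\eqref{c1}, which is the main technical input I would want to pin down carefully.
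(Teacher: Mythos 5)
Your proposal is correct and follows essentially the same route as the paper: Cram\'er--Wold reduction, Hermite expansion of $f_1,f_2$, the continuous Breuer--Major theorem in the short-range case, and isolation of the dominant first chaos (with an $L^2$ bound on the higher-order chaoses via $|r|^q\le r^2$) in cases (2) and (3). One small point in your favour: your value $c_{1,1}=\tfrac12(\alpha_++\alpha_-)$ is the correct first Hermite coefficient of $f_1$ (the paper's stated $\beta_{1,1}=\tfrac32(\alpha_+-\alpha_-)$ appears to be a computational slip), and in any case the needed non-vanishing of the first-chaos coefficients holds.
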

\begin{remark}
Note that the covariance matrices $\Sigma^2_i,i=1,2,3$ in Theorem \ref{thm:estimation-moment-clt} can be calculated explicitly in terms of the covariance $r$, $\alpha_+$, and $\alpha_-$ by computing the chaos decompositions of the functions
$
f_1(x) = \alpha_+ x\1_{x>0} + \alpha_- x\1_{x<0}
$
and $f_2(x) =\alpha_+ x^2\1_{x>0} + \alpha_- x^2\1_{x<0}$.
\end{remark}
\begin{remark}
By replacing $\hat\mu_n(T)$ with
$$
\hat\mu_n(t,T) = \frac{1}{T}\int_0^{tT} X_u^n du
$$
and normalising accordingly, one can obtain functional versions of the above limit theorems. That is, in cases (\ref{c1}) and (\ref{c2}) of Theorem \ref{thm:estimation-moment-clt},  we obtain convergence in law in the space of continuous functions towards $\sigma W_t$, where $W_t$ is a Brownian motion. In the case (\ref{c3}), the limiting process is $\sigma B^H_t$, where $B^H$ is the fractional Brownian motion. Indeed, the last case follows from a classical result by Taqqu \cite{taqqu} and the first case from \cite{nourdin-nualart} and from the fact that all moments of $X$ are finite. However, from practical point of view, translating these results to functional versions of the estimators $\hat\alpha_+(T)$ and $\hat\alpha_-(T)$ is not feasible.
Indeed, this follows from the fact that in the functional central limit theorem for $\hat\mu(t,T)$ the normalisation (subtracting the true value) is done inside the integral, while for $\hat\alpha_+(T)$ and
$\hat\alpha_-(T)$ this is done after integration.
\end{remark}
Theorems \ref{thm:estimation-consistency} and \ref{thm:estimation-moment-clt} now give us the following limiting distributions for the estimators $\alpha_+(T)$ and $\alpha_-(T)$.
\begin{theorem}
\label{thm:estimation-clt}
Let $\hat\alpha_+(T)$ and $\hat\alpha_-(T)$ be defined by \eqref{eq:alpha-plus_estimator} and \eqref{eq:alpha-minus_estimator}, respectively, and let $\hat\alpha(T) = (\hat\alpha_+(T),\hat\alpha_-(T))$ and $\alpha = (\alpha_+,\alpha_-)$. Then,
\begin{enumerate}
\item
if $r$ satisfies the condition (1) of Assumption \ref{assumption:covariance},
$$
\sqrt{T}\left(\hat\alpha(T)-\alpha\right) \to \mathcal{N}(0,\Sigma_A^2)
$$
in law,
\item
if $r$ satisfies the condition (2) of Assumption \ref{assumption:covariance}, then
$$
\sqrt{\frac{T}{\log T}}\left(\hat\alpha(T)-\alpha\right) \to \mathcal{N}(0,\Sigma_B^2)
$$
in law, and
\item
if $r$ satisfies the condition (3) of Assumption \ref{assumption:covariance}, then
$$
T^{1-H}\left(\hat\alpha(T)-\alpha\right) \to \mathcal{N}(0,\Sigma_C^2)
$$
in law,
\end{enumerate}
where $\Sigma_A^2$, $\Sigma_B^2$, and $\Sigma_C^2$ are constant covariance matrices depending on $\alpha_+,\alpha_-$, and the covariance $r$.
\end{theorem}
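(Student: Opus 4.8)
The plan is to obtain Theorem \ref{thm:estimation-clt} from Theorem \ref{thm:estimation-moment-clt} by the multivariate delta method, using that $\hat\alpha(T)$ is a smooth function of $\hat\mu(T)$ near the true value. Introduce the map $g=(g_+,g_-)\colon\R^2\to\R^2$ defined by
\[
g_+(x_1,x_2)=\sqrt{\tfrac{\pi}{2}}\,x_1+\tfrac12\sqrt{\bigl\lvert 4x_2-2\pi x_1^2\bigr\rvert},\qquad
g_-(x_1,x_2)=\tfrac12\sqrt{\bigl\lvert 4x_2-2\pi x_1^2\bigr\rvert}.
\]
By \eqref{eq:alpha-plus_estimator}--\eqref{eq:alpha-minus_estimator} we have $\hat\alpha(T)=g(\hat\mu(T))$, and by Lemma \ref{lemma:parameters} together with the identity $4\mu_2-2\pi\mu_1^2=(\alpha_++\alpha_-)^2>0$ we have $\alpha=g(\mu)$. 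Since $4\mu_2-2\pi\mu_1^2$ is strictly positive, $g$ is $C^\infty$ on a neighbourhood $U$ of $\mu$ (there the absolute value is inactive), so its Jacobian $J:=Dg(\mu)$ is well defined.

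First I would record $J$. Writing $h=4x_2-2\pi x_1^2$ one gets $\partial_{x_1}g_-=-\pi x_1/\sqrt{h}$ and $\partial_{x_2}g_-=1/\sqrt{h}$, while $\partial_{x_1}g_+=\sqrt{\pi/2}+\partial_{x_1}g_-$ and $\partial_{x_2}g_+=\partial_{x_2}g_-$; evaluating at $\mu$ (where $\sqrt{h(\mu)}=\alpha_++\alpha_-$ and $\mu_1=(\alpha_+-\alpha_-)/\sqrt{2\pi}$) gives the four entries of $J$ explicitly in terms of $\alpha_+,\alpha_-$, e.g.\ $\partial_{x_1}g_-(\mu)=-\sqrt{\pi/2}\,(\alpha_+-\alpha_-)/(\alpha_++\alpha_-)$ and $\partial_{x_2}g_-(\mu)=1/(\alpha_++\alpha_-)$.

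Next, by Theorem \ref{thm:estimation-consistency} we have $\hat\mu(T)\to\mu$ in $L^p$, hence in probability, so $\bp(\hat\mu(T)\in U)\to1$; on this event the absolute values in the definition of $\hat\alpha(T)$ may be dropped and $g$ is smooth there. Applying a first-order Taylor expansion of $g$ around $\mu$ and controlling the remainder via consistency and Slutsky's lemma, we obtain, with $a_T$ equal to $\sqrt{T}$, $\sqrt{T/\log T}$, $T^{1-H}$ in cases (1), (2), (3) respectively,
\[
a_T\bigl(\hat\alpha(T)-\alpha\bigr)=a_T\,J\bigl(\hat\mu(T)-\mu\bigr)+o_{\bp}(1),
\]
and Theorem \ref{thm:estimation-moment-clt} then yields $a_T(\hat\alpha(T)-\alpha)\to\mathcal N(0,J\Sigma_i^2 J^{\top})$. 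Thus the theorem holds with $\Sigma_A^2=J\Sigma_1^2 J^{\top}$, $\Sigma_B^2=J\Sigma_2^2 J^{\top}$, and $\Sigma_C^2=J\Sigma_3^2 J^{\top}$.

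The argument has no genuinely hard step; it is a fairly direct consequence of the two preceding theorems. The only point needing care is the justification of differentiability at what superficially looks like a nonsmooth point: one must use that $4\mu_2-2\pi\mu_1^2>0$ strictly to see that $g$ is actually smooth in a neighbourhood of $\mu$ and that the absolute values inserted in \eqref{eq:alpha-plus_estimator}--\eqref{eq:alpha-minus_estimator} are asymptotically irrelevant, after which the delta method applies verbatim. Finally, one may note that the limiting covariance matrices are fully explicit, obtained by combining the formula for $J$ above with the chaos-expansion expressions for $\Sigma_1^2,\Sigma_2^2,\Sigma_3^2$ indicated in the remark following Theorem \ref{thm:estimation-moment-clt}.
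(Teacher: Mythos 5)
Your proposal is correct in method and is precisely the argument the paper gives: the paper's entire proof of Theorem \ref{thm:estimation-clt} is the one-line remark that the result follows from Theorems \ref{thm:estimation-consistency} and \ref{thm:estimation-moment-clt} together with the multidimensional delta method, details left to the reader, and you have supplied exactly those details (smoothness of $g$ near $\mu$ via $4\mu_2-2\pi\mu_1^2=(\alpha_++\alpha_-)^2>0$, asymptotic irrelevance of the absolute values, the Jacobian, and $\Sigma^2_j=J\Sigma^2_iJ^{\top}$). One small correction is needed: with $g_-$ as you wrote it (taken literally from \eqref{eq:alpha-minus_estimator}), one gets $g_-(\mu)=\tfrac12\sqrt{4\mu_2-2\pi\mu_1^2}=\tfrac12(\alpha_++\alpha_-)\neq\alpha_-$, since Lemma \ref{lemma:parameters} gives $\alpha_-=\alpha_+-\sqrt{2\pi}\,\mu_1$ rather than $\alpha_+-\sqrt{\pi/2}\,\mu_1$; this factor-of-two slip is inherited from the paper's \eqref{eq:alpha-minus_estimator}, but it makes your asserted identity $\alpha=g(\mu)$ false as stated. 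Replacing $g_-$ by $-\sqrt{\pi/2}\,x_1+\tfrac12\sqrt{\lvert 4x_2-2\pi x_1^2\rvert}$ (which is what Lemma \ref{lemma:parameters} actually yields) restores $g(\mu)=\alpha$ and changes only the $(2,1)$ entry of $J$ to $-\sqrt{\pi/2}-\pi\mu_1/\sqrt{h(\mu)}$; the rest of your argument then goes through verbatim.
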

\begin{proof}
The result follows from Theorems \ref{thm:estimation-consistency} and \ref{thm:estimation-moment-clt}
together with a simple application of a multidimensional delta method. We leave the details to the reader.
\end{proof}
\begin{remark}
As in the case of Theorem \ref{thm:estimation-moment-clt}, the covariance matrices $\Sigma^2_j,j=A,B,C$ in Theorem \ref{thm:estimation-clt} can be calculated explicitly. Indeed, by utilising two-dimensional delta method, $\Sigma^2_j,j=A,B,C$ are linear transformations of $\Sigma^2_i,i=1,2,3$ defined in Theorem \ref{thm:estimation-moment-clt}. 
\end{remark}

\subsection{Proofs of Theorems \ref{thm:estimation-consistency} and \ref{thm:estimation-moment-clt}}
\label{subsec:proofs}
We begin with the following versions of weak law of large numbers.

\begin{proposition}[Laws of large numbers]
\label{prop:wlln}
Let $n\geq 1$ and suppose that $|r(T)| \to 0$ as $|T| \to \infty$. Then, for any $p\geq 1$, as $T\to \infty$,
$$
\frac{1}{T}\int_0^{T} X_u^n du \to \frac{2^{\frac{n}{2}}\Gamma\left(\frac{n+1}{2}\right)}{2\sqrt{\pi}}(\alpha_+^n+(-1)^n\alpha_-^n)
$$
in $L^p$ as $T \to \infty$.
\end{proposition}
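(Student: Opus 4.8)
The plan is to prove the $L^p$ convergence via the standard route: show that the mean of $\frac{1}{T}\int_0^T X_u^n\,du$ converges to the claimed constant (in fact it equals it exactly), and show that the $L^2$ variance vanishes as $T\to\infty$; then bootstrap from $L^2$ to $L^p$ using hypercontractivity on the Wiener chaos generated by $Y$, together with the fact that $X_u^n$ has all moments finite uniformly in $u$.

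First I would note that by Lemma \ref{lemma:moments}, $\E\left(\frac{1}{T}\int_0^T X_u^n\,du\right) = \frac{1}{T}\int_0^T \mu_n\,du = \mu_n = \frac{2^{n/2}\Gamma\left(\frac{n+1}{2}\right)}{2\sqrt{\pi}}(\alpha_+^n+(-1)^n\alpha_-^n)$, so the target constant is exactly the mean and it suffices to prove $L^p$ convergence to this mean. Next I would compute
$$
\E\left[\left(\frac{1}{T}\int_0^T X_u^n\,du - \mu_n\right)^2\right] = \frac{1}{T^2}\int_0^T\int_0^T \mathrm{Cov}(X_u^n,X_v^n)\,du\,dv .
$$
By Corollary \ref{corollary:covariance-X}, $\mathrm{Cov}(X_u^n,X_v^n) = O(|r(u-v)|)$, so this is bounded by $\frac{C}{T^2}\int_0^T\int_0^T |r(u-v)|\,du\,dv = \frac{C}{T^2}\int_0^T (T-|w|)|r(w)|\,dw \cdot 2 \le \frac{2C}{T}\int_0^T |r(w)|\,dw$. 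Since $|r(T)|\to 0$, the Cesàro averages $\frac{1}{T}\int_0^T |r(w)|\,dw$ also tend to $0$, so the $L^2$ error goes to $0$.

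To upgrade to general $p\ge 1$, for $p\le 2$ convergence is immediate by Jensen/Hölder. For $p>2$, I would invoke equivalence of $L^p$ norms on a fixed finite sum of Wiener chaoses: the function $x\mapsto x^n\1_{x>0}$ (and its negative counterpart) has a chaos expansion in the isonormal Gaussian process associated to $Y$, and although this expansion is infinite, the key point is that $\frac{1}{T}\int_0^T X_u^n\,du - \mu_n$ is a centered functional with uniformly bounded moments of all orders (since $\sup_u \E|X_u^n|^q < \infty$ for every $q$ by Lemma \ref{lemma:moments}-type Gaussian moment bounds, hence $\E\left|\frac{1}{T}\int_0^T X_u^n\,du\right|^q \le \frac{1}{T}\int_0^T \E|X_u^n|^q\,du < \infty$ uniformly in $T$ by Jensen). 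A sequence that is bounded in every $L^q$ and converges to $0$ in $L^2$ converges to $0$ in every $L^p$ by uniform integrability of $|\cdot|^p$ (dominate $|\cdot|^p$ by $\varepsilon$-truncation plus a tail controlled by the uniform $L^{p+1}$ bound). This gives the claim.

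The main obstacle is the $L^p$ upgrade for $p>2$: one must be careful that the uniform-in-$T$ moment bounds genuinely hold. This follows because $|X_u| \le \max(\alpha_+,\alpha_-)|Y_u|$ pointwise, so $\E|X_u^n|^q \le \max(\alpha_+,\alpha_-)^{nq}\,\E|Y_u|^{nq} = \max(\alpha_+,\alpha_-)^{nq}\,\E|N|^{nq}$, a constant independent of $u$; then Jensen's inequality applied to the probability measure $\frac{1}{T}\,du$ on $[0,T]$ gives the uniform bound on $\E\left|\frac{1}{T}\int_0^T X_u^n\,du\right|^q$, and combined with $L^2\to 0$ convergence and the de la Vallée-Poussin / uniform integrability argument, the $L^p$ convergence follows. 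The rest — the covariance estimate and the Cesàro averaging of $|r|$ — is routine given the earlier lemmas.
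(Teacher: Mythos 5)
Your proposal is correct and follows essentially the same route as the paper's own proof: an exact identification of the mean via Lemma \ref{lemma:moments}, an $L^2$ bound via the covariance estimate of Corollary \ref{corollary:covariance-X} (your Ces\`aro-average argument for $\frac{1}{T}\int_0^T|r(w)|\,dw\to 0$ is the same $\epsilon$--$T_0$ splitting the paper uses), and the upgrade to $L^p$ by uniform boundedness in every $L^q$ plus uniform integrability. The passing mention of hypercontractivity in your plan is unnecessary and unused, but the argument you actually give is complete and matches the paper.
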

\begin{proof}
In order to prove the claim, we have to show that
$$
\left\Vert\frac{1}{T}\int_0^T X_u^n - \mu_n du\right\Vert_p \to 0,
$$
where $\Vert \cdot\Vert_p$ is the $p$:th norm.
We first observe that it suffices to prove convergence in probability. Indeed, for every $p\geq 1$ and $\epsilon>0$, we have
$$
\sup_{T\geq 1}\left\Vert\frac{1}{T}\int_0^T X_u^n - \mu_n du\right\Vert_{p+\epsilon} \leq \sup_{T\geq 1}\frac{1}{T}\int_0^T \left\Vert X_u^n - \mu_n\right\Vert_{p+\epsilon} du \leq C.
$$
Thus, for every $p$, the quantity
$$
\left|\frac{1}{T}\int_0^T X_u^n - \mu_n du\right|^{p}
$$
is uniformly integrable. Now the result follows from the fact that uniform integrability and convergence in probability implies convergence in $L^1$, i.e.
$$
\E\left|\frac{1}{T}\int_0^T X_u^n - \mu_n du\right|^{p} \to 0, \quad \mbox{as}\quad T \to \infty.
$$
Let us now prove the convergence in $L^2$, which then implies the convergence in probability.
By Corollary \ref{corollary:covariance-X}, we have that
$$
\E\left|\frac{1}{T}\int_0^T X_u^n du - \frac{2^{\frac{n}{2}}\Gamma\left(\frac{n+1}{2}\right)}{2\sqrt{\pi}}(\alpha_+^n+(-1)^n\alpha_-^n)\right|^2 = T^{-2}\int_0^T\int_0^T a(u,s)du ds,
$$
where $a(u,s) = O(|r(s-u)|)$.
Writing
\begin{equation*}
\begin{split}
&\int_{(u,s)\in[0,T]^2}r(u-s)du ds \\
&= \int_{(u,s)\in[0,T]^2,|u-s|\geq T_0}r(u-s)du ds + \int_{(u,s)\in[0,T]^2,|u-s|<T_0}r(u-s)du ds
\end{split}
\end{equation*}
and choosing $T_0$ such that $|r(u-s)|<\epsilon$ on $\{(u,s)\in[0,T]^2,|u-s|\geq T_0\}$ yields the result.
\end{proof}
\begin{proof}[Proof of Theorem \ref{thm:estimation-consistency}]
By Proposition \ref{prop:wlln}, we have that
$(\hat\mu_1(T),\hat\mu_2(T)) \to (\mu_1,\mu_2)$ in $L^p$ as $T\to \infty$. As $ \displaystyle \sup_{T\geq 1}\Vert \hat\mu_1(T)\Vert_p < \infty$ for all $p\geq 1$, it follows from H\"older inequality that, for any $r>0$, we have
$$
\Vert \hat\mu^2_1(T) - \mu_1^2\Vert_p = \Vert (\hat\mu_1(T) + \mu_1)(\hat\mu_1(T)-\mu_1)\Vert_p \leq C \Vert \hat\mu_1(T)-\mu_1\Vert_{p+r},
$$
where $C$ is a  constant.
Thus
$$
\Vert \hat\mu^2_1(T) - \mu_1^2\Vert_p \to 0 \quad \mbox{as} \quad T \to \infty.
$$
Now, using $|\sqrt{a}-\sqrt{b}| \leq \sqrt{|a-b|}$ and the triangle inequality, we get
\begin{equation*}
\begin{split}
&\sqrt{\left|4\hat\mu_2(T)-2\pi\hat\mu^2_1(T)\right|} - \sqrt{\left|4\mu_2-2\pi\mu^2_1\right|} \\
&\leq C\sqrt{|\hat\mu_2(T)-\mu_2|} + C\sqrt{|\hat\mu_1^2(T) - \mu_1^2|}.
\end{split}
\end{equation*}
The claim now follows from the fact that, for any random variable $Z$ and for any $p\geq 2$,
$$
\Vert \sqrt{|Z|}\Vert_p = \sqrt{\Vert Z\Vert_{p/2}}.
$$
\end{proof}
We proceed now to the proof of Theorem \ref{thm:estimation-clt}. Before that we recall some preliminaries.

Let $N\sim \mathcal{N}(0,1)$ and let $f$ be a function such that $\E \left( f(N)^2\right) < \infty$. Then $f$ admits the Hermite decomposition
$$
f(x) = \sum_{k=0}^\infty \beta_k H_k(x),
$$
where $H_k,k=0,1,\ldots$ are the Hermite polynomials. The index $d=\min\{k\geq 1: \beta_k \neq 0\}$ is called the Hermite rank of $f$. For our purposes we need to consider the functions
$$
f_i(x) = \alpha^i_+ x^i \1_{x>0} + \alpha_- x^i \1_{x<0}, \quad i=1,2.
$$
The Hermite decompositions of $f_1$ and $f_2$ are denoted by
\begin{equation}
\label{eq:f1}
f_1(x) = \sum_{k=0} \beta_{1,k}H_k(x)
\end{equation}
and
\begin{equation}
\label{eq:f2}
f_2(x) = \sum_{k=0}\beta_{2,k}H_k(x),
\end{equation}
respectively.
\begin{proof}[Proof of Theorem \ref{thm:estimation-moment-clt}]
By Cramer-Wold device, it suffices to prove that
each linear combination
$$
Z(y_1,y_2,T):= y_1(\hat\mu_1(T) - \mu_1) + y_2(\hat\mu_2(T)-\mu_2),
$$
when properly normalised, converges towards a normal distribution. By using representations \eqref{eq:f1} and \eqref{eq:f2}, it follows that $Z(y_1,y_2,T)$ have representation
\begin{equation}
\label{eq:Z-rep}
Z(y_1,y_2,T) = \frac{1}{T}\int_0^T \sum_{k=0}^\infty \gamma_k H_k(Y_t)dt,
\end{equation}
where
$
\gamma_k = y_1\beta_{1,k} + y_2\beta_{2,k}.
$
Note also that we have
$
\E\hat\mu_i(T)  = \mu_i,\quad i=1,2,
$
and thus $\gamma_0 = 0$, i.e. $Z(y_1,y_2,T)$ is a normalised sequence.
We begin with the first case that is relatively easy. Indeed,
suppose that the condition (1) of Assumption \ref{assumption:covariance} holds. Then, as $r$ is integrable, continuous version of the Breuer-Major theorem (see e.g. \cite{nourdin-nualart}) implies the claim directly.

Under the other two conditions, we first note that the only contributing factor to the limiting distribution in \eqref{eq:Z-rep} is
$$
\frac{1}{T}\int_0^T \gamma_1H_1(Y_t)dt.
$$
This follows from the fact that
\begin{equation*}
\begin{split}
\E \left[\sum_{k=2}^\infty \gamma_k \int_0^T H_k(Y_t)dt\right]^2 \leq CT\int_0^T r^2(u)du
\end{split}
\end{equation*}
and clearly
$$
\frac{1}{\log T}\int_0^T r^2(u)du \to 0
$$
under the condition (2) and
$$
T^{1-2H}\int_0^T r^2(u)du \to 0
$$
under the condition (3). Thus it suffices to prove that
$$
[y_1\beta_{1,1}+y_2\beta_{2,1}]\frac{l(T)}{T}\int_0^T Y_t dt
$$
converges towards normal distribution, where $l(T) = \sqrt{\frac{T}{\log T}}$ under the condition (2) and $l(T) = T^{1-H}$ under the condition (3). Convergence of $\frac{l(T)}{T}\int_0^T Y_t dt$ follows from the fact that $Y$ is Gaussian, and the variance converges. Indeed, we have that
\begin{equation*}
\begin{split}
\E \left(\int_0^T Y_t dt\right)^2 &= \int_0^T \int_0^T \E (Y_uY_s)du ds \\
&= \int_0^T \int_0^T r(u-s)du ds \\
&= \int_0^T r(u)(T-u)du.
\end{split}
\end{equation*}
Under conditions (2) and (3) of Assumption \ref{assumption:covariance} we obtain that, in both cases,
$$
\frac{l^2(T)}{T^2}\int_0^T r(u)(T-u)du \to C>0.
$$
Thus, it suffices to prove that $\beta_{1,1}\neq 0$ or $\beta_{1,2}\neq 0$. Recall that
$$
f_1(x) = \alpha_+ x\1_{x>0}+\alpha_- x\1_{x<0}.
$$
Thus we have
$$
\beta_{1,1} = \E [f_1(N)N],
$$
where $N\sim\mathcal{N}(0,1)$. Using \eqref{eq:positive-moment} and \eqref{eq:negative-moment} we get
$$
\beta_{1,1} = \frac{3}{2}\left(\alpha_+-\alpha_-\right).
$$
Recalling that $\alpha_+\neq \alpha_-$ concludes the proof.
\end{proof}
\begin{remark}
Note that the proof of Theorem \ref{thm:estimation-clt} relied on the fact that $\alpha_+ \neq \alpha_-$. If $\alpha_+ = \alpha_- = \alpha$, then
$
X_t = \alpha|Y_t|
$
and it follows that $\gamma_1=0$ and $\gamma_2\neq 0$. Then, under conditions (1) and (2), the limiting distribution is normal and the rate is $\sqrt{T}$. Under the condition (3), the limiting distribution and the rate depends on the value of $H$. If $H<\frac34$, the limiting distribution is normal and the rate is $\sqrt{T}$. If $H=\frac34$, then the limiting distribution is still normal, but the rate is $\sqrt{\frac{T}{\log T}}$. For $H>\frac34$, the limiting distribution is the Rosenblatt distribution (multiplied by a constant) and the rate is $T^{2-2H}$.
\end{remark}
\subsection{Estimation based on discrete observations}
In practice, one does not observe the continuous path of $X$. Instead of that, one observes $X$ on some discrete time points $0\leq t_0 <t_1 < \ldots <T_N<\infty$. That is why, in practical applications, the integrals in \eqref{eq:moment-estimator} are approximated by discrete sums. Thus the natural moment estimators $\tilde{\mu}_n(N)$ are defined by
\begin{equation}
\label{eq:moment-estimator-discrete}
\tilde{\mu}_n(N) = \frac{1}{T_N}\sum_{k=1}^N X^n_{t_{k-1}}\Delta t_k,
\end{equation}

where $\Delta t_k = t_k - t_{k-1}$. The corresponding estimators $\tilde{\alpha}_+(N)$ and $\tilde{\alpha}_-(N)$ for parameters $\alpha_+$ and $\alpha_-$ are
\begin{equation}
\label{eq:alpha-plus_estimator-discrete}
\tilde\alpha_+(N) = \sqrt{\frac{\pi}{2}}\tilde\mu_1(N)
 + \frac{1}{2}\sqrt{\left|4\tilde\mu_2(N)-2\pi\tilde\mu^2_1(N)\right|}
\end{equation}
and
\begin{equation}
\label{eq:alpha-minus_estimator-discrete}
\tilde\alpha_-(N) = \tilde\alpha_+(N) - \sqrt{\frac{\pi}{2}}\tilde\mu_1(N).
\end{equation}
Let $\Delta_N = \max_k \Delta t_k$. In order to obtain consistency and asymptotic normality for the discretised versions, we have to assume that $T_N \to \infty$ and, at the same time, that $\Delta_N \to 0$ in a suitable way. The following proposition studies the difference between $\hat\mu_n(T_N)$ and $\tilde\mu_n(N)$.
\begin{proposition}
\label{prop:discrete-approx}
Denote the variogram of the stationary process $Y$ by $c(t)$, i.e.
$$
c(t) = 2\left[r(0)-r(t)\right],
$$
where $r$ is the covariance function.
Then, for any $n\geq 1$ and for any $p\geq 1$, there exists a constant $C=C(n,p,\alpha_+,\alpha_-)$ such that
$$
\left\Vert\hat\mu_n(T_N)-\tilde\mu_n(N)\right\Vert_p \leq C\sup_{0\leq t \leq \Delta_N}\sqrt{c(t)}.
$$
\end{proposition}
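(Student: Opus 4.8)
The plan is to compare the two estimators interval by interval and reduce the whole bound to the $L^{2p}$-norm of a single Gaussian increment of $Y$. Assuming, as is standard, that $t_0=0$ and $t_N=T_N$ (if $t_0>0$ one picks up an extra term $\tfrac1{T_N}\int_0^{t_0}X_u^n\,du$ that is estimated in exactly the same way), I would first write
$$
\hat\mu_n(T_N)-\tilde\mu_n(N)=\frac{1}{T_N}\sum_{k=1}^N\int_{t_{k-1}}^{t_k}\left(X_u^n-X_{t_{k-1}}^n\right)du ,
$$
and then apply Minkowski's integral inequality to move the $L^p$-norm inside:
$$
\left\Vert\hat\mu_n(T_N)-\tilde\mu_n(N)\right\Vert_p\le\frac{1}{T_N}\sum_{k=1}^N\int_{t_{k-1}}^{t_k}\left\Vert X_u^n-X_{t_{k-1}}^n\right\Vert_p\,du .
$$
Since $\sum_{k=1}^N(t_k-t_{k-1})=T_N$ and $u-s\le\Delta_N$ whenever $u\in[t_{k-1},t_k]$ and $s=t_{k-1}$, the claim reduces to the uniform pointwise estimate $\left\Vert X_u^n-X_s^n\right\Vert_p\le C(n,p,\alpha_+,\alpha_-)\sqrt{c(u-s)}$.

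To establish that pointwise estimate I would reuse the elementary bounds from the proof of Proposition~\ref{prop:holder}, namely $|X_t-X_s|\le(\alpha_++\alpha_-)|Y_t-Y_s|$ and $|X_t|\le\max(\alpha_+,\alpha_-)|Y_t|$. Factoring $a^n-b^n=(a-b)\sum_{j=0}^{n-1}a^jb^{n-1-j}$ with $a=X_u$ and $b=X_s$ gives
$$
\left|X_u^n-X_s^n\right|\le n(\alpha_++\alpha_-)\max(\alpha_+,\alpha_-)^{n-1}\,|Y_u-Y_s|\left(|Y_u|^{n-1}+|Y_s|^{n-1}\right),
$$
and one use of the Cauchy--Schwarz inequality for the $L^p$-norm, $\Vert fg\Vert_p\le\Vert f\Vert_{2p}\Vert g\Vert_{2p}$, separates the increment from the growth factor,
$$
\left\Vert X_u^n-X_s^n\right\Vert_p\le C(n,\alpha_+,\alpha_-)\,\Vert Y_u-Y_s\Vert_{2p}\left\Vert |Y_u|^{n-1}+|Y_s|^{n-1}\right\Vert_{2p}.
$$
By stationarity of $Y$ the last factor equals a finite constant depending only on $n$ and $p$, all Gaussian moments being finite; and $Y_u-Y_s$ is centered Gaussian with variance $2[r(0)-r(u-s)]=c(u-s)$, so $\Vert Y_u-Y_s\Vert_{2p}=m_{2p}\sqrt{c(u-s)}$ with $m_{2p}:=\Vert N\Vert_{2p}$, $N\sim\mathcal{N}(0,1)$. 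Multiplying these out yields the required bound $\left\Vert X_u^n-X_s^n\right\Vert_p\le C(n,p,\alpha_+,\alpha_-)\sqrt{c(u-s)}$.

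Plugging this back into the Minkowski estimate, bounding $c(u-s)\le\sup_{0\le t\le\Delta_N}c(t)$ on each interval, and collapsing the sum via $\sum_k(t_k-t_{k-1})=T_N$ gives precisely
$$
\left\Vert\hat\mu_n(T_N)-\tilde\mu_n(N)\right\Vert_p\le C(n,p,\alpha_+,\alpha_-)\sup_{0\le t\le\Delta_N}\sqrt{c(t)} .
$$
I do not anticipate a genuine obstacle; the one point that deserves attention is verifying that the constant in $\left\Vert X_u^n-X_s^n\right\Vert_p\le C\sqrt{c(u-s)}$ is really independent of $u$ and $s$ (hence of $k$ and $N$), which is exactly what stationarity of $Y$ and the finiteness of all its Gaussian moments provide. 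Note also that a variogram need not be monotone, which is why the statement is phrased with $\sup_{0\le t\le\Delta_N}$ rather than with $c(\Delta_N)$; this causes no difficulty, since the supremum enters only at the very last step.
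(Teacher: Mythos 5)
Your proof is correct and follows essentially the same route as the paper: Minkowski's inequality over the partition intervals, the factorization $x^n-y^n=(x-y)\sum_j x^jy^{n-1-j}$, the Lipschitz-type bound $|X_u-X_s|\le C|Y_u-Y_s|$ from Proposition~\ref{prop:holder}, and reduction to $\Vert Y_u-Y_s\Vert=\sqrt{c(u-s)}$ by stationarity. The only (immaterial) difference is that you compute the Gaussian $L^{2p}$-moment of the increment directly, where the paper passes through repeated H\"older estimates and hypercontractivity to reach the $L^2$-norm.
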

\begin{proof}

We have, by Minkowski inequality, that \begin{equation*}
\begin{split}
&\left\Vert \hat\mu_n(T_N)-\tilde\mu_n(N)\right\Vert_p\\
&= \left\Vert \frac{1}{T_N}\int_0^{T_N}X_u^n du - \frac{1}{T_N}\sum_{k=1}^N X^n_{t_{k-1}}\Delta t_k\right\Vert_p \\
&\leq  \frac{1}{T_N} \sum_{k=1}^N \int_{t_{k-1}}^{t_k}\left\Vert X_u^n - X_{t_{k-1}}^n\right\Vert_p du.
\end{split}
\end{equation*}
Using,
$$
x^n-y^n = (x-y)\sum_{j=0}^{n-1}x^jy^{n-1-j},
$$
we get, for any $s,u\geq 0$, that
$$
|X_s^n-X_u^n| \leq |X_s-X_u|\sum_{j=0}^{n-1}|X_s|^j|X_u|^{n-1-j}.
$$
Thus, a repeated application of H\"older inequality together with the fact that $\sup_{s\geq 0}\Vert X_s\Vert_p < \infty$ implies that, for every $q>p$, we have
$$
\left\Vert X_u^n - X_{s}^n\right\Vert_p \leq C\Vert X_u - X_s\Vert_{q},
$$
where $C$ is a constant.
Moreover, by the proof of Proposition \ref{prop:holder}, we have,
$$
|X_u-X_s| \leq C|Y_u-Y_s|.
$$
Since $Y$ is Gaussian, hypercontractivity implies that
$$
\Vert X_u - X_s\Vert_q \leq C\Vert Y_u - Y_s\Vert_2.
$$
Now stationarity of $Y$ gives
$$
\Vert Y_u -Y_s\Vert_2 = \sqrt{c(u-s)}.
$$
Thus we observe
\begin{equation*}
\begin{split}
&\frac{1}{T_N} \sum_{k=1}^N \int_{t_{k-1}}^{t_k}\left\Vert X_u^n - X_{t_{k-1}}^n\right\Vert_p du \\
& \leq \frac{C}{T_N} \sum_{k=1}^N \int_{t_{k-1}}^{t_k}\sqrt{c(u-t_{k-1})} du\\
&\leq C \sup_{0\leq t \leq \Delta_N}\sqrt{c(t)}
\end{split}
\end{equation*}
proving the claim.
\end{proof}
We can now easily deduce the following results for the asymptotical properties of the estimators $\tilde\alpha_+$ and $\tilde\alpha_-$.
\begin{theorem}
\label{thm:estimation-consistency-discrete}
Let $\tilde\alpha_+(N)$ and $\tilde\alpha_-(N)$ be defined by \eqref{eq:alpha-plus_estimator-discrete} and \eqref{eq:alpha-minus_estimator-discrete}, respectively. Suppose that $r(T) \to 0$ as $T\to \infty$ and that $\sup_{0\leq s\leq T}c(s)\to 0$ as $T\to 0$. If $T_N \to \infty$ and $\Delta_N \to 0$ as $N \to \infty$, then for any $p\geq 1$,
$$
\tilde\alpha_+(N) \to \alpha_+
$$
and
$$
\tilde\alpha_-(N) \to \alpha_-
$$
in $L^p$.
\end{theorem}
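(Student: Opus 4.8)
The plan is to reduce the statement to the already-established continuous-time result, Theorem \ref{thm:estimation-consistency}, by controlling the discretisation error through Proposition \ref{prop:discrete-approx}, and then to repeat verbatim the continuity argument used in the proof of Theorem \ref{thm:estimation-consistency}.

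First I would establish that $\tilde\mu_n(N)\to\mu_n$ in $L^p$ for every $p\ge 1$ and $n=1,2$. Decompose
$$
\tilde\mu_n(N)-\mu_n = \big(\tilde\mu_n(N)-\hat\mu_n(T_N)\big) + \big(\hat\mu_n(T_N)-\mu_n\big).
$$
The second term tends to $0$ in $L^p$ by Proposition \ref{prop:wlln}, since $T_N\to\infty$ and $r(T)\to 0$. For the first term, Proposition \ref{prop:discrete-approx} gives
$$
\big\Vert\tilde\mu_n(N)-\hat\mu_n(T_N)\big\Vert_p \le C\sup_{0\le t\le\Delta_N}\sqrt{c(t)} = C\sqrt{\sup_{0\le t\le\Delta_N}c(t)},
$$
which tends to $0$ because $\Delta_N\to 0$ and, by hypothesis, $\sup_{0\le s\le T}c(s)\to 0$ as $T\to 0$. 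Hence $\tilde\mu_n(N)\to\mu_n$ in $L^p$.

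Second, I would run the same argument as in the proof of Theorem \ref{thm:estimation-consistency}, with $\tilde\mu$ in place of $\hat\mu$. Its only prerequisite is the uniform bound $\sup_N\Vert\tilde\mu_1(N)\Vert_p<\infty$ for every $p$, which follows from Minkowski's inequality and $\sup_{s\ge 0}\Vert X_s\Vert_p<\infty$:
$$
\Vert\tilde\mu_1(N)\Vert_p \le \frac{1}{T_N}\sum_{k=1}^N \Vert X_{t_{k-1}}\Vert_p\,\Delta t_k \le \sup_{s\ge 0}\Vert X_s\Vert_p .
$$
Given this, Hölder's inequality yields $\Vert\tilde\mu_1^2(N)-\mu_1^2\Vert_p\to 0$, and the elementary bounds $|\sqrt a-\sqrt b|\le\sqrt{|a-b|}$ together with $\Vert\sqrt{|Z|}\Vert_p=\sqrt{\Vert Z\Vert_{p/2}}$ give
$$
\Big\Vert\sqrt{|4\tilde\mu_2(N)-2\pi\tilde\mu_1^2(N)|}-\sqrt{|4\mu_2-2\pi\mu_1^2|}\Big\Vert_p \to 0 .
$$
Plugging these into the definitions \eqref{eq:alpha-plus_estimator-discrete} and \eqref{eq:alpha-minus_estimator-discrete} yields $\tilde\alpha_+(N)\to\alpha_+$ and $\tilde\alpha_-(N)\to\alpha_-$ in $L^p$.

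The argument is routine once Proposition \ref{prop:discrete-approx} is available; the only point that deserves attention is the interplay between the two limiting regimes, with $T_N\to\infty$ governing the ergodic averaging and $\Delta_N\to 0$ governing the Riemann-sum approximation, and verifying that the hypothesis $\sup_{0\le s\le T}c(s)\to 0$ (as $T\to 0$) is precisely what turns the bound of Proposition \ref{prop:discrete-approx} into a vanishing quantity. I expect no genuine obstacle beyond this bookkeeping.
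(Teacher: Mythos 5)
Your proposal is correct and follows essentially the same route as the paper: the paper also combines Proposition \ref{prop:discrete-approx} with the continuity argument from the proof of Theorem \ref{thm:estimation-consistency}, merely applying the triangle inequality at the level of the estimators $\tilde\alpha(N)$ versus $\hat\alpha(T_N)$ rather than at the level of the moments. Your version is somewhat more detailed (the paper's proof is two lines), but the ingredients and structure are identical.
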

\begin{proof}
Using the arguments of the proof of Theorem \ref{thm:estimation-consistency} together with Proposition \ref{prop:discrete-approx} we deduce that
$$
\Vert \tilde\alpha_+(N) - \hat\alpha_+(T_N)\Vert_p \to 0
$$
and
$$
\Vert \tilde\alpha_-(N) - \hat\alpha_-(T_N)\Vert_p \to 0.
$$
Thus the claim follows from Theorem \ref{thm:estimation-consistency}.
\end{proof}
\begin{theorem}
\label{thm:estimation-clt-discrete}
Let $\tilde\alpha_+(N)$ and $\tilde\alpha_-(N)$ be defined by \eqref{eq:alpha-plus_estimator-discrete} and \eqref{eq:alpha-minus_estimator-discrete}, respectively, and let $\tilde\alpha(N) = (\tilde\alpha_+(N),\tilde\alpha_-(N))$ and $\alpha = (\alpha_+,\alpha_-)$. Let $\Sigma_A^2$, $\Sigma_B^2$, and $\Sigma_C^2$ be the same covariance matrices as in Theorem \ref{thm:estimation-clt}.
Suppose further that $\displaystyle \sup_{0\leq s\leq t}c(s)\to 0$ as $t\to 0$, $T_N \to \infty$, and $\Delta_N \to 0$ as $N \to \infty$. Denote
$$
h(N) = \sup_{0\leq s \leq \Delta_N}\sqrt{c(s)}.
$$
Then,
\begin{enumerate}
\item \label{a1}
if $r$ satisfies the condition (1) of Assumption \ref{assumption:covariance},
$$
\sqrt{T_N}\left(\tilde\alpha(N)-\alpha\right) \to \mathcal{N}(0,\Sigma_A^2)
$$
in law for every partitions $0\leq t_0 < \ldots T_N$ satisfying $\sqrt{T_N}h(N)\to 0$,
\item \label{a2}
if $r$ satisfies the condition (2) of Assumption \ref{assumption:covariance},
$$
\sqrt{\frac{T_N}{\log T_N}}\left(\tilde\alpha(N)-\alpha\right) \to \mathcal{N}(0,\Sigma_B^2)
$$
in law for every partitions $0\leq t_0 < \ldots T_N$ satisfying $\sqrt{\frac{T_N}{\log T_N}}h(N)\to 0$, and 
\item \label{a3}
if $r$ satisfies the condition (3) of Assumption \ref{assumption:covariance} ,
$$
T_N^{1-H}\left(\tilde\alpha(N)-\alpha\right) \to \mathcal{N}(0,\Sigma_C^2)
$$
in law for every partitions $0\leq t_0 < \ldots T_N$ satisfying $T_N^{1-H}h(N)\to 0$.
\end{enumerate}
\end{theorem}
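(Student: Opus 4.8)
The plan is to deduce the theorem from the continuous-observation results already established, using Proposition~\ref{prop:discrete-approx} to absorb the discretisation error and then the multidimensional delta method to pass from the moment vector to the parameters, exactly as in the proof of Theorem~\ref{thm:estimation-clt}.

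As a first step I would prove the discrete analogue of Theorem~\ref{thm:estimation-moment-clt}. Writing $\rho(N)$ for the normalising rate attached to the regime under consideration ($\rho(N)=\sqrt{T_N}$, $\rho(N)=\sqrt{T_N/\log T_N}$, or $\rho(N)=T_N^{1-H}$), the claim is that $\rho(N)(\tilde\mu(N)-\mu)\to\mathcal{N}(0,\Sigma_i^2)$ in law, with $\Sigma_i^2$, $i=1,2,3$, the matrix from Theorem~\ref{thm:estimation-moment-clt}. To see this, decompose
\[
\rho(N)\bigl(\tilde\mu(N)-\mu\bigr)=\rho(N)\bigl(\hat\mu(T_N)-\mu\bigr)+\rho(N)\bigl(\tilde\mu(N)-\hat\mu(T_N)\bigr).
\]
Since $T_N\to\infty$, Theorem~\ref{thm:estimation-moment-clt} applied along the sequence $T=T_N$ shows that the first term converges in law to $\mathcal{N}(0,\Sigma_i^2)$; by Proposition~\ref{prop:discrete-approx} the second term has $L^p$-norm at most $C\rho(N)h(N)$ for every $p\ge1$, which tends to $0$ precisely by the partition hypothesis imposed in each of \eqref{a1}--\eqref{a3}. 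Hence the second term vanishes in probability, and Slutsky's theorem gives the claim.

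Next I would transfer this to $\tilde\alpha(N)$ by the delta method. Let $G=(G_+,G_-)$ be the map whose components are the right-hand sides of \eqref{eq:alpha-plus_estimator-discrete} and \eqref{eq:alpha-minus_estimator-discrete} regarded as functions of the pair $(m_1,m_2)$, so that $\tilde\alpha(N)=G(\tilde\mu(N))$ and, by Lemma~\ref{lemma:parameters}, $\alpha=G(\mu)$. At the true value one has $4\mu_2-2\pi\mu_1^2=(\alpha_++\alpha_-)^2>0$ (the identity established in the proof of Lemma~\ref{lemma:parameters}), so in a neighbourhood of $\mu$ the absolute value inside the square roots is inactive and $G$ is continuously differentiable there, with the very Jacobian used in Theorem~\ref{thm:estimation-clt}. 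Since $\rho(N)\to\infty$ in all three cases (using $T_N\to\infty$ and $H<1$), the multidimensional delta method combined with the first step yields $\rho(N)(\tilde\alpha(N)-\alpha)=\rho(N)(G(\tilde\mu(N))-G(\mu))\to\mathcal{N}(0,\Sigma_j^2)$, $j=A,B,C$, where $\Sigma_j^2$ is the same linear image of $\Sigma_i^2$ that appears in Theorem~\ref{thm:estimation-clt}; this is exactly the stated conclusion.

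The only genuinely delicate point --- and the one I would be most careful about --- is the absolute value inside the square root in \eqref{eq:alpha-plus_estimator-discrete}, which could a priori spoil the differentiability of $G$ at $\mu$. This is resolved by the strict positivity $4\mu_2-2\pi\mu_1^2=(\alpha_++\alpha_-)^2>0$, which, together with the first step, confines the random argument $4\tilde\mu_2(N)-2\pi\tilde\mu_1^2(N)$ to a neighbourhood of $(\alpha_++\alpha_-)^2$ with probability tending to one, so that $G$ may be treated as smooth for the purposes of the delta method. Everything else is a routine combination of Theorem~\ref{thm:estimation-moment-clt}, Proposition~\ref{prop:discrete-approx}, and Slutsky's theorem, and I would leave those verifications to the reader.
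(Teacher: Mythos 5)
Your proof is correct and follows essentially the same route as the paper: Proposition~\ref{prop:discrete-approx} shows that the discretisation error is negligible after normalisation under the stated mesh conditions, and Slutsky's theorem then transfers the continuous-time limit theorems to the discrete estimators. The only (harmless) difference is that you absorb the error at the level of the moment vector $\tilde\mu(N)$ and then apply the delta method, whereas the paper compares $\tilde\alpha(N)$ to $\hat\alpha(T_N)$ directly and invokes Theorem~\ref{thm:estimation-clt}; your ordering in fact handles the square root in \eqref{eq:alpha-plus_estimator-discrete} more transparently.
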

\begin{proof}
The additional conditions on the mesh together with Proposition \ref{prop:discrete-approx} guarantee that
$$
l(T_N)\Vert \tilde\alpha(N) - \hat\alpha(T_N)\Vert_p \to 0,
$$
where $l(T_N)$ is the corresponding normalisation for each case. Thus the result follows directly from Theorem \ref{thm:estimation-clt}.
\end{proof}
One natural way for choosing the observation points such that the above mentioned conditions are fulfilled, is to choose $N$ equidistant points with $\Delta_N = \frac{\log N}{N}$. Then $\Delta_N \to 0$ and $T_N = N\Delta_N = \log N \to \infty$. If, in addition, $Y$ is H\"older continuous of some order $\theta>0$, then also the rest of the requirements are satisfied. Indeed, it follows from \cite[Theorem 1]{gauss-holder} that if $Y$ is H\"older continuous of order $\theta>0$, then for any $\epsilon>0$, we have
$$
c(t) \leq Ct^{\theta-\epsilon}
$$
for some constant $C$. Thus $h(N)\leq \sqrt{C}\Delta_N^{\frac12(\theta-\epsilon)}$, from which it is easy to see that, for $\epsilon<\theta$,
$$
T_N^{1-H}h(N) \leq \sqrt{\frac{T_N}{\log T_N}}h(N) \leq \sqrt{T_N}h(N) \leq \sqrt{C}\frac{(\log N)^{\frac12(1+\theta-\epsilon)}}{N^{\frac12(\theta-\epsilon)}} \to 0.
$$
\subsection{Oscillating Self-similar Gaussian Processes}
\label{subsec:ss-oscillating}
Self-similar processes form an interesting and applicable class of stochastic processes.
In this subsection, we consider oscillating Gaussian processes driven by self-similar Gaussian processes $Y$. In other words, we consider processes of the type
$$
X_t = \alpha_+ Y_t\1_{Y_t>0} + \alpha_-Y_t\1_{Y_t<0},
$$
where $Y$ is $H$-self-similar for some $H>0$. That is, for every $a>0$, the finite dimensional distributions of the processes $(Y_{at})_{t\geq 0}$ and $(a^HY_t)_{t\geq 0}$ are the same. Throughout this section we assume that we have observed $X_t$ on an interval $[0,1]$, and our aim is to estimate $\alpha_+$ and $\alpha_-$. The key ingredient is the Lamperti transform
\begin{equation}
\label{eq:lamperti}
U_t = e^{-Ht}Y_{e^t}.
\end{equation}
It is well-known that $U$ is stationary on $(-\infty,0]$. Moreover, for $t\geq 0$, we define a process
$$
\tilde{X}_t := e^{Ht}X_{e^{-t}} = \alpha_+ U_{-t} \1_{U_{-t} >0} + \alpha_- U_{-t}\1_{U_{-t}<0}.
$$
Clearly, observing $X$ on $[0,1]$ is equivalent to observing $\tilde{X}_t$ on $t\geq 0$. This leads to the ''moment estimators'' $\hat\mu_i(T)$ defined by
\begin{equation}
\label{eq:moment-estimator-ss}
\hat\mu_i(T) = \frac{1}{T}\int_{e^{-T}}^1 u^{-H-1}X_u^idu.
\end{equation}
The corresponding parameter
estimators $\hat\alpha_+(T)$ and $\hat\alpha_-(T)$ are defined by plugging in $\hat\mu_1(T)$ and $\hat\mu_2(T)$ into \eqref{eq:alpha-plus_estimator} and \eqref{eq:alpha-minus_estimator}, respectively.
Indeed, a change of variable $u=e^{t}$ gives
$$
\hat\mu_i(T)=\frac{1}{T}\int_0^T \tilde{X}^i_t dt.
$$
Thus studying the covariance function $r$ of a stationary Gaussian process $U$ given by \eqref{eq:lamperti} enables us to apply Theorems \ref{thm:estimation-consistency} and \ref{thm:estimation-clt}.

\subsection{The case of bi-fractional Brownian motion}
We end this section with an interesting example. We consider bifractional Brownian motions that, among others, cover fractional Brownian motions and standard Brownian motions. Recall that a bifractional Brownian motion $B^{H,K}$ with $H\in(0,1)$ and $K\in(0,2)$ such that $HK\in(0,1)$ is a centered Gaussian process with a covariance function
$$
R(s,t)=\frac{1}{2^K}\left[(t^{2H}+s^{2H})^K-|t-s|^{2HK}\right].
$$
It is known that $B^{H,K}$ is $HK$-self-similar. Furthermore, one recovers fractional Brownian motion by plugging in $K=1$, from which standard Brownian motion is recovered by further setting $H=\frac12$. Now the covariance function $r$ of the Lamperti transform $U_t=e^{-HKt}B^{H,K}_{e^t}$ has exponential decay (see \cite{langevin}). Thus, we may apply the item (1) of Theorem \ref{thm:estimation-clt} to obtain that
$
\sqrt{T}(\hat\alpha_+(T)-\alpha_+)
$
and
$\sqrt{T}(\hat\alpha_-(T)-\alpha_-)$
are asymptotically normal. Similarly, discretising the integral in \eqref{eq:moment-estimator-ss} and applying Theorems \ref{thm:estimation-consistency-discrete} and \ref{thm:estimation-clt-discrete} allows us to consider parameter estimators based on discrete observations. We leave the details to the reader.

\section{Discussion}
In this paper we considered oscillating Gaussian processes and introduced a moment based estimators for the model parameters. Moreover, we proved consistency and asymptotic normality of the estimators under natural assumptions on the driving Gaussian process. 
An interesting and natural extension to our approach would be to consider oscillating processes with several (more than two) parameters and corresponding regions. This would make the model class more flexible and adaptive. Another topic for future research would be to develop testing procedures for the model parameters.

\subsection*{Acknowledgements}

S. Torres is partially supported by the Project Fondecyt N. 1171335. P. Ilmonen and L. Viitasaari wishes to thank Vilho, Yrj\"o, and Kalle V\"ais\"al\"a foundation for financial support.


\end{document}